\documentclass[10pt,a4paper]{amsart}
\usepackage[latin1]{inputenc}
\usepackage{latexsym}
\usepackage{color,graphicx,shortvrb}
\usepackage{amsmath, amssymb}
\usepackage{amsfonts}
\usepackage[colorlinks, bookmarks=true]{hyperref}

\newtheorem{theorem}{Theorem}[section]
\newtheorem{lemma}[theorem]{Lemma}
\newtheorem{proposition}[theorem]{Proposition}
\newtheorem{corollary}[theorem]{Corollary}

\theoremstyle{definition}
\newtheorem{definition}[theorem]{Definition}
\newtheorem{remark}[theorem]{Remark}



\setlength{\textwidth}{16cm}       
\setlength{\oddsidemargin}{0.25cm}   
\setlength{\evensidemargin}{0.25cm}  
\setlength{\topmargin}{1.2cm}     

\author{J. M. Almira$^*$, Kh. F. Abu-Helaiel}
\title{On  Montel's theorem in several variables}
\thanks{$^*$ Corresponding author}

\begin{document}





\begin{abstract} 
Recently, the first author of this paper, used the structure of finite dimensional translation invariant subspaces of $C(\mathbb{R},\mathbb{C})$ to give a new proof of classical Montel's theorem, about continuous solutions of Fr\'{e}chet's functional equation $\Delta_h^mf=0$, for real functions (and complex functions) of one real variable. In this paper we use similar ideas to prove a Montel's type theorem for the case of complex valued functions defined over the discrete group $\mathbb{Z}^d$. Furthermore, we  also state and demonstrate an improved version of  Montel's Theorem  for complex functions of several real variables and  complex functions of several complex variables.
\end{abstract}

\maketitle

\begin{quotation}
\noindent{\bf Key Words}: {Functional Equations,  Montel's Theorem, Invariant subspaces.}

\noindent{\bf 2010 Mathematics Subject Classification}:  47A15, 46F05, 46F10, 39B22, 39B32\\
\end{quotation}

\markboth{J. M. Almira, Kh. F. Abu-Helaiel}{On  Montel's theorem in several variables}

\section{Motivation}


A famous result proved by Jacobi in 1834 claims that  if $f:\mathbb{C}\to\widehat{\mathbb{C}}$ is a non constant meromorphic function defined on the complex numbers, then  $\mathfrak{P}_0(f)=\{w\in\mathbb{C}:f(z+w)=f(z) \text{ for all } z\in\mathbb{C}\}$, the set of periods of $f$,  is a discrete subgroup of $(\mathbb{C},+)$. This reduces the possibilities to the following three cases: $\mathfrak{P}_0(f)=\{0\}$, or $\mathfrak{P}_0(f)=\{nw_1:n\in\mathbb{Z}\}$ for a certain complex number $w_1\neq 0$, or $\mathfrak{P}_0(f)=\{n_1w_1+n_2w_2:(n_1,n_2)\in \mathbb{Z}^2\}$ for certain complex numbers $w_1,w_2$ satisfying $w_1w_2\neq 0$ and $w_1/w_2\not\in\mathbb{R}$. In particular, these functions cannot have three independent periods and there exist meromorphic functions  $f:\mathbb{C}\to\widehat{\mathbb{C}}$  with two independent periods $w_1,w_2$ as soon as $w_1/w_2\not\in\mathbb{R}$. These functions are called doubly periodic (or elliptic) and have an important role in complex function theory \cite{JS}.  Analogously, if the function $f:\mathbb{R}\to\mathbb{R}$ is continuous and non constant, it does not admit  two 
$\mathbb{Q}$-linearly independent periods. 

Obviously,  Jacobi's theorem can be formulated as a result which characterizes the constant functions as those meromorphic  functions $f:\mathbb{C}\to\widehat{\mathbb{C}}$  which solve a system of functional equations of the form 
\begin{equation}\label{JC}
\Delta_{h_1}f(z)=\Delta_{h_2}f(z)=\Delta_{h_3}f(z)=0 \ \ (z\in \mathbb{C})
\end{equation}
for three independent periods $\{h_1,h_2,h_3\}$ (i.e., $h_1\mathbb{Z}+h_2\mathbb{Z}+h_3\mathbb{Z}$ is a dense subset of  $\mathbb{C}$). For the real case, the result states that, if $h_1,h_2\in\mathbb{R}\setminus\{0\}$ are two nonzero real numbers and $h_1/h_2\not\in\mathbb{Q}$,  the continuous function $f:\mathbb{R}\to\mathbb{R}$ is a constant function if and only if it solves the system of functional equations 
\begin{equation} \label{JR}
\Delta_{h_1}f(x)=\Delta_{h_2}f(x)= 0\ (x\in \mathbb{R}).
\end{equation}
In 1937  Montel \cite{montel} proved an interesting nontrivial generalization of Jacobi's theorem. Concretely, he substituted in the equations $(\ref{JC}),(\ref{JR})$ above the first difference operator $\Delta_h$ by  the higher differences operator $\Delta^{m+1}_h$ (which is inductively defined by $\Delta_h^{n+1}f(x)=\Delta_h(\Delta_h^nf)(x)$, $n=1,2,\cdots$) and proved that these equations are appropriate for the characterization of ordinary polynomials. In particular, he proved the following result:
\begin{theorem}[Montel] \label{monteldimuno} Assume that $f:\mathbb{C}\to\mathbb{C}$ is an analytic function  which solves a system of functional equations of the form 
\begin{equation}\label{JCM}
\Delta_{h_1}^{m+1}f(z)=\Delta_{h_2}^{m+1}f(z)=\Delta_{h_3}^{m+1}f(z)=0 \ \ (z\in \mathbb{C})
\end{equation}
for three independent periods $\{h_1,h_2,h_3\}$. Then $f(z)=a_0+a_1z+\cdots+a_mz^m$ is an ordinary polynomial with complex coefficients and degree $\leq m$. Furthermore, if $\{h_1,h_2\}\subset \mathbb{R}\setminus \{0\}$ satisfy $h_1/h_2\not\in\mathbb{Q}$, 
 the continuous function $f:\mathbb{R}\to\mathbb{R}$ is an ordinary polynomial with real coefficients and degree $\leq m$ if and only if it solves the system of functional equations 
\begin{equation} \label{JRM}
\Delta_{h_1}^{m+1}f(x)=\Delta_{h_2}^{m+1}f(x)= 0\ (x\in \mathbb{R}).
\end{equation}
\end{theorem}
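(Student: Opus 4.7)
The plan is to exploit the translation-invariant subspace strategy alluded to in the abstract. Write $T_h g(z)=g(z+h)$, so that $\Delta_h=T_h-I$, and note that the operators $T_{h_1},T_{h_2},T_{h_3}$ pairwise commute. The hypothesis $(T_{h_j}-I)^{m+1}f=0$ for $j=1,2,3$ shows that the cyclic module $W := \mathbb{C}[T_{h_1},T_{h_2},T_{h_3}]\cdot f$ is a quotient of the finite dimensional algebra $\mathbb{C}[X_1,X_2,X_3]/(X_1^{m+1},X_2^{m+1},X_3^{m+1})$, so $\dim_{\mathbb{C}}W\le(m+1)^3$. Using the same relations to solve for $T_{h_j}^{-1}g$ in terms of $g,T_{h_j}g,\dots,T_{h_j}^{m}g$ for every $g\in W$, we see that $W$ is stable under $T_a$ for every $a\in h_1\mathbb{Z}+h_2\mathbb{Z}+h_3\mathbb{Z}$.

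Next I would upgrade this to invariance under all complex translations. Since $\{h_1,h_2,h_3\}$ are independent periods, the subgroup $h_1\mathbb{Z}+h_2\mathbb{Z}+h_3\mathbb{Z}$ is dense in $\mathbb{C}$; for arbitrary $a\in\mathbb{C}$ and $g\in W$ one picks $a_n$ in this subgroup with $a_n\to a$, and observes that $T_{a_n}g\to T_a g$ in the compact open topology on entire functions. Because $W$ is finite dimensional it is automatically closed in that topology, hence $T_a g\in W$. Therefore $W$ is a finite dimensional, translation invariant subspace of entire functions containing $f$. I would then invoke the classical structure theorem according to which every such subspace decomposes as a direct sum of subspaces of the form $\spn\{z^k e^{\lambda z}:0\le k\le d_\lambda\}$. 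A short spectral computation shows that $\Delta_h^{m+1}$ annihilates such a summand if and only if $e^{\lambda h}=1$ and $d_\lambda\le m$, so each $\lambda$ appearing in $W$ must satisfy $\lambda h_j\in 2\pi i\mathbb{Z}$ for $j=1,2,3$. For $\lambda\ne 0$, the map $w\mapsto\lambda w$ is a homeomorphism of $\mathbb{C}$ that would send the dense subgroup into the discrete set $2\pi i\mathbb{Z}$, which is impossible; hence $\lambda=0$ throughout, $W$ consists of ordinary polynomials, and the single equation $\Delta_{h_1}^{m+1}f=0$ forces $\deg f\le m$.

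The second (real) assertion follows by exactly the same scheme with $\mathbb{C}$ replaced by $\mathbb{R}$, the three complex periods replaced by the two real periods $\{h_1,h_2\}$ (with $h_1\mathbb{Z}+h_2\mathbb{Z}$ dense in $\mathbb{R}$, which is equivalent to $h_1/h_2\notin\mathbb{Q}$), and entire functions replaced by $C(\mathbb{R},\mathbb{C})$ with the topology of uniform convergence on compact sets; real-valuedness of $f$ transfers directly from the polynomial representation to the coefficients. The main obstacle I anticipate is the upgrade from discrete to continuous translation invariance in the second step: one must verify that $W$, built a priori only from translations in the dense subgroup, is in fact invariant under every translation. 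This closure argument rests on $W$ being finite dimensional (hence closed) in a topology in which the one-parameter group $a\mapsto T_a$ acts continuously, which is precisely the role played by the analyticity (or continuity) of $f$.
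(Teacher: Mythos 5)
Your proposal is correct and follows essentially the same route the paper takes: build the finite-dimensional cyclic module generated by $f$ under the difference operators, use the relation $(T_{h_j}-I)^{m+1}=0$ to get invertibility of $T_{h_j}$ on it, pass to full translation invariance via density and closedness of finite-dimensional subspaces, and then kill the nonzero frequencies of the Anselone--Korevaar exponential-monomial basis by the invertibility of $\Delta_{h_j}$ on the summands with $e^{\lambda h_j}\neq 1$. This is precisely the invariant-subspace argument of \cite{almira_invariantes} that the paper cites for this theorem and reproduces (in the discrete and distributional settings) in its Proposition \ref{VdentroW} and Theorems \ref{discretemonteltheorem} and \ref{montelvv}.
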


The functional equation $\Delta_h^{m+1}f(x)=0$ had already been introduced in the literature by M. Fr\'{e}chet in 1909 as a particular case of the functional equation  
\begin{equation}\label{fre}
\Delta_{h_1h_2\cdots h_{m+1}}f(x)=0 \ \ (x,h_1,h_2,\dots,h_{m+1}\in \mathbb{R}),
\end{equation}
where $f:\mathbb{R}\to\mathbb{R}$ and $\Delta_{h_1h_2\cdots h_s}f(x)=\Delta_{h_1}\left(\Delta_{h_2\cdots h_s}f\right)(x)$, $s=2,3,\cdots$. In particular, after Fr\'{e}chet's 
seminal paper \cite{frechet}, the solutions of \eqref{fre} are named ``polynomials'' by the Functional Equations community, since it is known that, under very mild regularity conditions on $f$, if $f:\mathbb{R}\to\mathbb{R}$ satisfies \eqref{fre}, then $f(x)=a_0+a_1x+\cdots a_{m}x^{m}$ for all $x\in\mathbb{R}$ and certain constants $a_i\in\mathbb{R}$. For example, in order to have this property, it is enough for $f$ being locally bounded \cite{frechet}, \cite{almira_antonio}, but there are stronger results \cite{ger1}, \cite{kuczma1}, \cite{mckiernan}, \cite{popa_rasa}. The equation \eqref{fre} can be studied for functions $f:X\to Y$  whenever $X, Y$ are two  $\mathbb{Q}$-vector spaces and the variables $x,h_1,\cdots,h_{m+1}$ are assumed to be elements of $X$:
\begin{equation}\label{fregeneral}
\Delta_{h_1h_2\cdots h_{m+1}}f(x)=0 \ \ (x,h_1,h_2,\dots,h_{m+1}\in X).
\end{equation}
In this context, the general solutions of \eqref{fregeneral} are characterized as functions of the form $f(x)=A_0+A_1(x)+\cdots+A_m(x)$, where $A_0$ is a constant and $A_k(x)=A^k(x,x,\cdots,x)$ for a certain $k$-additive symmetric function $A^k:X^k\to Y$ (we say that $A_k$ is the diagonalization of $A^k$). In particular, if $x\in X$ and $r\in\mathbb{Q}$, then $f(rx)=A_0+rA_1(x)+\cdots+r^mA_m(x)$. Furthermore, it is known that $f:X\to Y$ satisfies \eqref{fregeneral} if and only if it satisfies 
\begin{equation}\label{frepasofijo}
\Delta_{h}^{m+1}f(x):=\sum_{k=0}^{m+1}\binom{m+1}{k}(-1)^{m+1-k}f(x+kh)=0 \ \ (x,h\in X).
\end{equation}
A proof of this fact follows directly from Djokovi\'{c}'s Theorem \cite{Dj} (see also \cite[Theorem 7.5, page 160]{HIR}, \cite[Theorem 15.1.2., page 418]{kuczma}), which states that  the operators $\Delta_{h_1 h_2\cdots h_s}$ satisfy the equation
\begin{equation}\label{igualdad}
\Delta_{h_1\cdots h_s}f(x)=
\sum_{\epsilon_1,\dots,\epsilon_s=0}^1(-1)^{\epsilon_1+\cdots+\epsilon_s}
\Delta_{\alpha_{(\epsilon_1,\dots,\epsilon_s)}(h_1,\cdots,h_s)}^sf(x+\beta_{(\epsilon_1,\dots,\epsilon_s)}(h_1,\cdots,h_s)),
\end{equation}
where $$\alpha_{(\epsilon_1,\dots,\epsilon_s)}(h_1,\cdots,h_s)=(-1)\sum_{r=1}^s\frac{\epsilon_rh_r}{r}$$ and $$\beta_{(\epsilon_1,\dots,\epsilon_s)}(h_1,\cdots,h_s)=\sum_{r=1}^s\epsilon_rh_r.$$  

In his seminal paper \cite{montel}, Montel also studied the equation $\eqref{frepasofijo}$ for $X=\mathbb{R}^d$, with $d>1$,  and $f:\mathbb{R}^d\to\mathbb{C}$ continuous, and for $X=\mathbb{C}^d$ and $f:\mathbb{C}^d\to \mathbb{C}$ analytic. Concretely, he stated (and gave a proof for $d=2$) the following result.
\begin{theorem}[Montel's Theorem in several variables] \label{montelvvcf}  Let $\{h_1,\cdots,h_{\ell}\}\subset \mathbb{R}^d$  be such that 
\begin{equation} \label{perm}
h_1\mathbb{Z}+h_2\mathbb{Z}+\cdots+h_{\ell}\mathbb{Z} \text{ is a dense subset of } \mathbb{R}^d,
\end{equation}
and let $f\in C(\mathbb{R}^d,\mathbb{C})$ be such that $\Delta_{h_k}^m(f) =0$, $k=1,\cdots,\ell$. Then  $f(x)=\sum_{|\alpha|<N}a_{\alpha}x^{\alpha}$ for some $N\in\mathbb{N}$, some complex numbers $a_{\alpha}$, and all $x\in\mathbb{R}^d$. Thus, $f$ is an ordinary complex valued polynomial in $d$ real variables. 

Consequently, if $d=2k$, $\{h_i\}_{i=1}^{\ell}$ satisfies $\eqref{perm}$, the function  $f:\mathbb{C}^k\to \mathbb{C}$ is holomorphic and $\Delta_{h_k}^m(f) =0$, $k=1,\cdots,\ell$, then  $f(z)=\sum_{|\alpha|<N}a_{\alpha}z^{\alpha}$ is an ordinary complex valued polynomial in $k$ complex variables. 
\end{theorem}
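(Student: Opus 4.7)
\emph{Plan of proof.} The strategy is to imitate the one-variable argument mentioned in the abstract: attach to $f$ a finite-dimensional translation-invariant subspace $W\subset C(\R^d,\C)$, classify such subspaces in terms of exponential polynomials, and show that the density hypothesis forces the exponents to vanish.

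Write $L_k=\tau_{h_k}$ for the shift $\tau_h g(x)=g(x+h)$, so that $\Delta_{h_k}^m f=(L_k-I)^m f=0$. Set
\[
W_0=\spn\{L_1^{n_1}\cdots L_\ell^{n_\ell}f : n_1,\dots,n_\ell\ge 0\}.
\]
The identity $(L_k-I)^m=0$ rewrites $L_k^m$ as a combination of $I,L_k,\dots,L_k^{m-1}$ on $W_0$, so $W_0$ is already spanned by those monomials with $0\le n_k<m$; in particular $\dim W_0\le m^\ell$. On this finite-dimensional space every $L_k$ has spectrum $\{1\}$, hence is invertible with $L_k^{-1}$ a polynomial in $L_k$. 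Consequently $W_0$ is stable under $\tau_t$ for every $t$ in the subgroup $H:=h_1\Z+\cdots+h_\ell\Z$.

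Because $f$ is continuous, $h\mapsto\tau_h f$ is continuous from $\R^d$ into $C(\R^d,\C)$ endowed with the topology of pointwise convergence, in which the finite-dimensional subspace $W_0$ is closed. Since $\tau_t f\in W_0$ for every $t$ in the \emph{dense} subgroup $H\subset\R^d$, we deduce $\tau_h f\in W_0$ for every $h\in\R^d$; by linearity, $W:=W_0$ is a finite-dimensional translation-invariant subspace of $C(\R^d,\C)$ containing $f$, on which $(L_k-I)^m=0$ for every $k$.

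Now I invoke the structure theorem for finite-dimensional translation-invariant subspaces of $C(\R^d,\C)$: the continuous homomorphism $x\mapsto\tau_x\in\mathrm{GL}(W)$ is automatically smooth, so $\tau_x=\exp(x_1A_1+\cdots+x_dA_d)$ for pairwise commuting $A_j\in\mathrm{End}(W)$, and joint generalized eigenspace decomposition shows that every $g\in W$ has the form $g(x)=\sum_j p_j(x)e^{\lambda_j\cdot x}$ with finitely many $\lambda_j\in\C^d$ and polynomials $p_j$. The relation $(L_k-I)^m=0$ on $W$ forces each eigenvalue $e^{\lambda_j\cdot h_k}$ of $\tau_{h_k}$ to equal $1$, so $\lambda_j\cdot h_k\in2\pi i\Z$; the continuous character $h\mapsto e^{\lambda_j\cdot h}$ then equals $1$ on the dense set $H$, hence on all of $\R^d$, so every $\lambda_j=0$. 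Therefore every element of $W$, and in particular $f$, is an ordinary polynomial of degree $<m$. The holomorphic statement on $\C^k$ follows by identifying $\C^k\cong\R^{2k}$ and observing that real-polynomiality together with holomorphy forces $f$ to be an ordinary complex polynomial in $k$ complex variables. The step I expect to require the most care is the exponential-polynomial structure theorem; to keep the argument elementary one can bypass any Lie-theoretic smoothness result by an induction on $\dim W$, at each step producing a joint eigenfunction of the commuting shifts $\tau_{h_1},\dots,\tau_{h_\ell}$.
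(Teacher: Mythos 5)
Your argument is essentially the same as the one the paper uses for its stronger version, Theorem~\ref{montelvv}: build a finite-dimensional subspace containing $f$ that is invariant under each $\tau_{h_k}$, upgrade to full translation invariance via density and continuity, invoke the Anselone--Korevaar description of such subspaces by exponential monomials, and then kill the nontrivial frequencies with the eigenvalue computation from the proof of Theorem~\ref{discretemonteltheorem}. Two points where you do something slightly different, both to your advantage: (i) instead of the $\Box/\diamond$ machinery of Lemmas~\ref{uno}--\ref{dos} you exploit directly that $(\tau_{h_k}-I)^m=0$ on the orbit span, which gives in one stroke a space of dimension at most $m^{\ell}$ that is invariant under $\tau_{h_k}$ \emph{and} $\tau_{h_k}^{-1}$ (the latter being a polynomial in $\tau_{h_k}$); (ii) you spell out the passage from invariance under the dense subgroup $H$ to invariance under all of $\R^d$ (continuity of $h\mapsto\tau_h f$ in the pointwise topology plus closedness of finite-dimensional subspaces), which the paper leaves implicit but identifies, in the remark after Theorem~\ref{montelvv}, as exactly the step that breaks down for merely measurable $f$.

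One genuine error, though it does not affect the theorem as stated: your final claim that every element of $W$ has degree $<m$ is false for $d\ge 2$. Nilpotency of $\Delta_{h_k}$ of order $m$ on a space of polynomials only controls the degree \emph{along the direction} $h_k$, not the total degree. For instance, with $d=2$, $m=3$, $h_1=(1,0)$, $h_2=(0,1)$, $h_3=(\sqrt2,\sqrt3)$ (a dense subgroup by Kronecker's theorem), the cubic $f(x)=x_1x_2(\sqrt3\,x_1-\sqrt2\,x_2)$ restricts to a polynomial of degree $\le 2$ on every line parallel to some $h_k$, hence satisfies $\Delta_{h_k}^3f=0$ for $k=1,2,3$, yet $\deg f=3>m-1$. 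The correct bound depends on $d$ and on the configuration of the $h_k$ (compare Corollary~\ref{control_grado_discreto}, where the extremal example has total degree $(m-1)d$). Since the statement you are proving asserts only that $f$ is a polynomial, you should simply delete the degree claim.
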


\begin{remark} The finitely generated  subgroups of $(\mathbb{R}^d,+)$ which are dense in $\mathbb{R}^d$ have been deeply studied, and many characterizations of them are known \cite[Proposition 4.3]{W}. For example, a theorem by Kronecker guarantees that, given $\theta_1,\theta_2,\cdots,\theta_d\in\mathbb{R}$, the group $\mathbb{Z}^d+(\theta_1,\theta_2,\cdots,\theta_d)\mathbb{Z}$ (which is generated by $d+1$ elements) is dense in 
$\mathbb{R}^d$ if and only if, 
\[
n_0+n_1\theta_1+\cdots+n_d\theta_d\neq 0, \text{ for every }(n_0,n_1,\cdots,n_d)\in\mathbb{Z}^{d+1}
\]  
(see, e.g., \cite[Theorem 4.1]{W}, for the proof of this result). 
\end{remark}



In \cite{almira_invariantes} the author  used the structure of finite dimensional translation invariant subspaces of $C(\mathbb{R},\mathbb{C})$ to give a new proof of Theorem \ref{monteldimuno}. In this paper we use similar ideas to prove a Montel's type theorem for the case of complex valued functions defined over the discrete group $\mathbb{Z}^d$. Furthermore, we  also state and demonstrate an improved version of  Theorem \ref{montelvvcf}. 

In all the paper we use the following standard notation: If $\alpha=(\alpha_1,\cdots,\alpha_d)\in\mathbb{N}^d$, $x=(x_1,\cdots,x_d)\in \mathbb{C}^d, \lambda=(\lambda_1,\cdots,\lambda_d)\in (\mathbb{C}\setminus\{0\})^d$, and $n=(n_1,\cdots,n_d)\in\mathbb{Z}^d$, then $n^{\alpha}=n_1^{\alpha_1}n_2^{\alpha_2}\cdots n_d^{\alpha_d}$, $x^{\alpha}=x_1^{\alpha_1}x_2^{\alpha_2}\cdots x_d^{\alpha_d}$, $\lambda^{n}=\lambda_1^{n_1}\cdots\lambda_d^{n_d}$, $|\alpha|=\sum_{k=1}^d \alpha_k$. $\Pi_{m}^{d}$ denotes the set of complex polynomials in $d$ variables with total degree $\leq m$ (when $d=1$ we write $\Pi_m$ instead of $\Pi_{m}^1$). Finally, 
 $f\in C(\mathbb{Z}^d,\mathbb{C})$ is named an ``exponential monomial'' if there exists a polynomial in the variables $n_1,\cdots,n_d\in\mathbb{Z}$, $p(n)=\sum_{|\alpha|\leq N}a_{\alpha}n^{\alpha}$, and a vector $\lambda\in (\mathbb{C}\setminus\{0\})^d$, such that $f(n)=p(n)\lambda^n$, for all $n\in\mathbb{Z}^d$. 

\section{Main resuts}

For the discrete case, we will need to use the following well known result by M. Lefranc, whose proof is based on algebraic geometry arguments (see, e.g., \cite{ron},  \cite{lefranc}, \cite{laszlo}).
  
\begin{theorem}[Lefranc, 1958] Assume that $V$ is a closed vector subspace of $C(\mathbb{Z}^d,\mathbb{C})$ which is invariant by translations, and let $\Gamma_V$ denote the set of exponential monomials which belong to $V$. Then $V=\overline{\mathbf{span}(\Gamma_V)}$. 
\end{theorem}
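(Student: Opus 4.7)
The plan is to dualise the problem and then appeal to the algebraic geometry of the Laurent polynomial ring. First I would note that $C(\mathbb{Z}^d,\mathbb{C})$ carries only the topology of pointwise convergence, under which it is a Fr\'echet space whose topological dual consists of finite linear combinations of point evaluations $\sum_{k} c_k \delta_{n_k}$. Via the identification $\delta_n \leftrightarrow z^n$, this dual becomes the Laurent polynomial ring $R=\mathbb{C}[z_1^{\pm 1},\ldots,z_d^{\pm 1}]$, with duality pairing $\langle P, f\rangle = \sum_k c_k f(n_k)$ for $P=\sum_k c_k z^{n_k}$. Since $V$ is closed in a locally convex space, Hahn--Banach gives $V = (V^{\perp})^{\perp}$.

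Next I would consider the annihilator $I = V^{\perp}\subseteq R$ and verify that $I$ is an ideal. Translation invariance of $V$ forces $I$ to be stable under multiplication by every monomial $z^{m}$, because $\langle z^{m}P,f\rangle = \langle P, \tau_{-m}f\rangle = 0$ whenever $f\in V$; since the monomials generate $R$ as a ring, $I$ is an ideal. Unwinding the biannihilator one obtains the convolution description
\[
V \;=\; \{\,f\in C(\mathbb{Z}^d,\mathbb{C}) \;:\; P\ast f \equiv 0 \text{ for every } P\in I\,\},
\]
where $(P\ast f)(m)=\sum_k c_k f(m+n_k)$.

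The key algebraic dictionary is then this: a candidate exponential monomial $f(n)=p(n)\lambda^{n}$ with $\lambda\in(\mathbb{C}\setminus\{0\})^d$ lies in $V$ if and only if every $P\in I$ vanishes at $\lambda$ together with a prescribed finite list of partial derivatives determined by $p$. Thus the exponential monomials in $\Gamma_V$ are in natural bijection with the jet data of $I$ along the variety $Z(I)\subseteq(\mathbb{C}^{*})^{d}$. Since $R$ is Noetherian, $I$ admits a primary decomposition $I=Q_{1}\cap\cdots\cap Q_{r}$; by classical commutative algebra each finite-length quotient $R/Q_{j}$ is in duality with the finite-dimensional space of exponential monomial solutions supported at the associated component of $Z(I)$.

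The main obstacle will be the final density step: to deduce that every $f\in V$ lies in $\overline{\mathbf{span}(\Gamma_V)}$ and not merely that $\Gamma_V$ is sufficiently rich algebraically. The strategy is to prove the contrapositive via the dual picture: if $g\in V\setminus \overline{\mathbf{span}(\Gamma_V)}$, then Hahn--Banach would provide a Laurent polynomial $P\in R$ with $P\ast f \equiv 0$ on every exponential monomial solution of $I$, yet $P\notin I$; one then uses the primary decomposition together with the dictionary above to force $P\in\sqrt{I}\cdot(\text{something})$ and ultimately $P\in I$, a contradiction. This last step is the genuinely nontrivial content of Lefranc's theorem, and is the reason its original proof is phrased in the language of algebraic geometry; the preceding duality and ideal-theoretic setup is essentially formal.
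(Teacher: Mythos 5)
The paper does not actually prove this statement: it imports Lefranc's theorem as a known result, citing \cite{ron}, \cite{lefranc}, \cite{laszlo} for the proof. So there is no internal proof to compare against, and your attempt has to stand on its own.

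Your formal setup is correct and is indeed the standard first half of every proof of this theorem: the dual of $C(\mathbb{Z}^d,\mathbb{C})$ with the pointwise topology is the ring of finitely supported functionals, identified with $R=\mathbb{C}[z_1^{\pm1},\dots,z_d^{\pm1}]$; the bipolar theorem gives $V=(V^\perp)^\perp$; translation invariance makes $I=V^\perp$ an ideal; and exponential monomials $p(n)\lambda^n$ in $V$ correspond to vanishing of the elements of $I$ at $\lambda$ to prescribed orders. But the proof has a genuine gap exactly where you acknowledge it: the claim that a Laurent polynomial $P$ annihilating every exponential monomial of $V$ must lie in $I$. The phrase ``force $P\in\sqrt{I}\cdot(\text{something})$ and ultimately $P\in I$'' is not an argument, and primary decomposition alone does not deliver it. The difficulty is that the primary components of $I$ need not be zero-dimensional: if $Q$ is primary to a prime $\mathfrak{p}$ with $\dim V(\mathfrak{p})>0$, the quotient $R/Q$ is not finite-dimensional and there is no direct Macaulay duality with a finite-dimensional space of exponential monomials, contrary to what your sketch asserts. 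The missing ingredient is the purely algebraic theorem (essentially due to Krull, and the actual heart of Lefranc's argument) that every ideal $I$ in a finitely generated $\mathbb{C}$-algebra equals the intersection of all ideals containing $I$ that are primary to \emph{maximal} ideals. Granting that, each such zero-dimensional primary ideal $Q\supseteq I$ has Artinian quotient $R/Q$, Macaulay duality identifies its annihilator in $C(\mathbb{Z}^d,\mathbb{C})$ with a finite-dimensional span of exponential monomials at the corresponding point of $(\mathbb{C}^*)^d$, all of which lie in $V$ since $Q\supseteq I$; then $P\perp\Gamma_V$ forces $P\in Q$ for every such $Q$, hence $P\in\bigcap Q=I$, which is the contradiction you want. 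Without Krull's intersection statement (or an equivalent, such as the Noether--Lasker analysis Lefranc carries out), the chain of implications does not close, and this is precisely the step that fails for spectral synthesis on more general groups, so it cannot be waved through as formal.
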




An immediate consequence of Lefranc's Theorem is the following 

\begin{corollary} \label{cor_uno}
If $V$ is a finite dimensional vector subspace of $C(\mathbb{Z}^d,\mathbb{C})$ which is invariant by translations, then $V\subseteq W=\mathbf{span}(\bigcup_{k=0}^s\{n^\alpha\lambda_k^n: |\alpha|\leq m_k-1\})$ for  certain $s\in\mathbb{N}$,  $\{\lambda_k\}_{k=0}^s\subseteq (\mathbb{C}\setminus\{0\})^d$ and $\{m_k\}_{k=0}^s\subseteq \mathbb{N}$. 
\end{corollary}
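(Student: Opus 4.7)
The plan is to use Lefranc's theorem together with the fact that finite dimensional subspaces are automatically closed to eliminate the closure in the conclusion of Lefranc's result, and then to cluster the exponential monomials in a basis by their exponential factors.

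First, I would note that since $V$ is finite dimensional, $V$ is closed in $C(\mathbb{Z}^d,\mathbb{C})$. Lefranc's theorem therefore yields
\[
V = \overline{\mathbf{span}(\Gamma_V)}.
\]
Since $\Gamma_V \subseteq V$, we have $\mathbf{span}(\Gamma_V) \subseteq V$, and as a linear subspace of a finite dimensional space, $\mathbf{span}(\Gamma_V)$ is itself closed. Consequently $V = \mathbf{span}(\Gamma_V)$, with no closure required.

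Next, since $V$ is finite dimensional and equal to the span of the (possibly infinite) set $\Gamma_V$, there exists a finite subset $\{p_1(n)\mu_1^n, \ldots, p_r(n)\mu_r^n\} \subseteq \Gamma_V$ that spans $V$, where each $p_j$ is a polynomial in $n = (n_1, \ldots, n_d)$ and each $\mu_j \in (\mathbb{C}\setminus\{0\})^d$. Let $\{\lambda_0, \lambda_1, \ldots, \lambda_s\}$ be the distinct values occurring among $\mu_1, \ldots, \mu_r$, and for each $k \in \{0, 1, \ldots, s\}$ define
\[
m_k - 1 := \max\{\deg(p_j) : 1 \leq j \leq r,\ \mu_j = \lambda_k\},
\]
so that $m_k \in \mathbb{N}$.

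Finally, each basis element $p_j(n)\mu_j^n$ can be written as a linear combination of the monomials $n^\alpha \mu_j^n$ with $|\alpha| \leq \deg(p_j) \leq m_k - 1$ (where $\lambda_k = \mu_j$). Therefore
\[
V = \mathbf{span}\{p_j(n)\mu_j^n : 1 \leq j \leq r\} \subseteq \mathbf{span}\Bigl(\bigcup_{k=0}^{s}\{n^\alpha \lambda_k^n : |\alpha| \leq m_k - 1\}\Bigr) = W,
\]
which is the desired conclusion. The only nontrivial ingredient is Lefranc's theorem; the rest is bookkeeping, and no genuine obstacle arises once one observes that finite dimensionality removes the closure.
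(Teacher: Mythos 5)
Your proof is correct and follows exactly the route the paper intends: the paper presents the corollary as an immediate consequence of Lefranc's theorem, and your argument (finite dimensionality makes $\mathbf{span}(\Gamma_V)$ closed, so the closure in Lefranc's conclusion can be dropped, after which one extracts a finite spanning set of exponential monomials and groups them by their exponential factors) is precisely the bookkeeping that justifies that claim. No gaps.
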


\begin{remark} Here and, in all what follows, we assume that $\lambda_0=(1,\cdots,1)$ and that $m_0=0$ means that there are no elements of the form $n^{\alpha}$ in the basis.  
We always assume that $m_k\geq 1$ for $k=1,\cdots,s$. 
\end{remark}

Let us now state two technical results, which are extremely important for our arguments in this section.

\begin{lemma} \label{uno} Let $E$ be a vector space and $L:E\to E$ be a linear operator defined on $E$. If $V\subset E$ is an $L^m$-invariant subspace of $E$, then the space
\[
\Box_L^m(V)=V+L(V)+L^2(V)+\cdots+L^m(V)
\] 
is $L$-invariant. Furthermore, $\Box_L^m(V)$ is the smallest $L$-invariant subspace of $E$ containing $V$.
\end{lemma}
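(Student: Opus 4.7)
The plan is to verify the two claims separately, with the $L^m$-invariance of $V$ being used only once, to close the sum at step $m+1$.

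First I would check that $\Box_L^m(V)$ is $L$-invariant. By linearity,
\[
L\bigl(\Box_L^m(V)\bigr) = L(V) + L^2(V) + \cdots + L^m(V) + L^{m+1}(V).
\]
The first $m$ summands are already part of $\Box_L^m(V)$ by definition. The key observation is the treatment of $L^{m+1}(V)$: write $L^{m+1}(V) = L(L^m(V))$, and use the hypothesis $L^m(V)\subseteq V$ to conclude $L^{m+1}(V)\subseteq L(V)\subseteq \Box_L^m(V)$. Adding these inclusions gives $L(\Box_L^m(V))\subseteq \Box_L^m(V)$, as wanted. Note that $V\subseteq \Box_L^m(V)$ is immediate from the definition, so $\Box_L^m(V)$ is indeed an $L$-invariant subspace containing $V$.

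For the minimality, I would take an arbitrary $L$-invariant subspace $W\subseteq E$ with $V\subseteq W$ and show $\Box_L^m(V)\subseteq W$. A trivial induction on $k$ yields $L^k(V)\subseteq W$ for every $k\geq 0$: the base case is $V\subseteq W$, and the inductive step follows from $L(W)\subseteq W$. Summing for $k=0,1,\ldots,m$ gives $\Box_L^m(V)\subseteq W$, which finishes the proof.

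The statement is essentially bookkeeping, so there is no real obstacle; the only subtlety worth flagging is that the $L^m$-invariance of $V$ is exactly what is needed so that the (a priori possibly growing) chain $\sum_{k=0}^N L^k(V)$ stabilizes at $N=m$, and not before. Without this hypothesis, the smallest $L$-invariant subspace containing $V$ would be $\sum_{k\geq 0} L^k(V)$, which in general need not be finitely generated from $V$ by $L$.
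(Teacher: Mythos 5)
Your proof is correct and follows essentially the same route as the paper's: the $L$-invariance is obtained by writing $L^{m+1}(V)=L(L^m(V))\subseteq L(V)$, and minimality by observing that any $L$-invariant subspace containing $V$ must contain every $L^k(V)$ for $0\leq k\leq m$. No issues.
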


\begin{proof}  The linearity of $L$ implies that 
\[
L(\Box_L^m(V))=L(V)+L^2(V)+L^3(V)+\cdots+L^m(V)+L^{m+1}(V).
\]
Now, $L^{m+1}(V)=L(L^{m}(V))\subseteq L(V)$ and $L(V)+L(V)=L(V)$, so that $L(\Box_L^m(V))\subseteq \Box_L^m(V)$. 

On the other hand, let us assume that $V\subseteq F\subseteq E$ and $F$ is an $L$-invariant subspace of $E$. If $\{v_k\}_{k=0}^m\subseteq V$, then 
$L^k(v_k)\in F$ for all $k\in \{0,1,\cdots,m\}$, so that $v_0+L(v_1)+\cdots+L^m(v_m)\in F$. This proves that $\Box_L^m(V)\subseteq F$.
\end{proof}

\begin{lemma} \label{dos} Let $E$ be a vector space and $L_1,L_2,\cdots,L_t:E\to E$ be linear operators defined on $E$. Assume that $L_iL_j=L_jL_i$ for all $i\neq j$.  If $V\subset E$ is a vector subspace of $E$ which satisfies  $\bigcup_{i=1}^tL_i^m(V)\subseteq V$, then 
\[
\diamond_{L_1,L_2,\cdots,L_t}^m(V)= \Box_{L_t}^m(\Box_{L_{t-1}}^m(\cdots (\Box_{L_1}^m(V))\cdots)) 
\]
is $L_i$-invariant for $i=1,2,\cdots, t$, and contains $V$. 
\end{lemma}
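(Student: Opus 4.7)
The plan is to proceed by induction on $k = 1, 2, \ldots, t$, proving the stronger statement that
\[
W_k := \Box_{L_k}^m(\Box_{L_{k-1}}^m(\cdots (\Box_{L_1}^m(V))\cdots))
\]
contains $V$ and is simultaneously $L_1$-, $L_2$-, \ldots, $L_k$-invariant. Setting $k=t$ will then give the statement, since $W_t = \diamond_{L_1,\ldots,L_t}^m(V)$.

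For the base case $k=1$, the hypothesis $L_1^m(V)\subseteq V$ lets us apply Lemma \ref{uno} directly to $L_1$ and $V$, giving $W_1=\Box_{L_1}^m(V)\supseteq V$ and the $L_1$-invariance of $W_1$. For the inductive step, suppose $W_{k-1}$ contains $V$ and is $L_j$-invariant for every $j<k$. Before Lemma \ref{uno} can be applied to $L_k$ and $W_{k-1}$, I need to check that $W_{k-1}$ is $L_k^m$-invariant. Unwinding the definitions, every element of $W_{k-1}$ is a finite sum of terms of the form $L_{k-1}^{i_{k-1}}\cdots L_1^{i_1}(v)$ with $v\in V$ and $0\le i_j\le m$. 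Using the commutativity hypothesis to move $L_k^m$ past each $L_j$ with $j<k$, and then using $L_k^m(V)\subseteq V$, one gets
\[
L_k^m\bigl(L_{k-1}^{i_{k-1}}\cdots L_1^{i_1}(v)\bigr)=L_{k-1}^{i_{k-1}}\cdots L_1^{i_1}\bigl(L_k^m(v)\bigr)\in L_{k-1}^{i_{k-1}}\cdots L_1^{i_1}(V)\subseteq W_{k-1}.
\]
So Lemma \ref{uno} applies and delivers $W_k=\Box_{L_k}^m(W_{k-1})\supseteq W_{k-1}\supseteq V$ together with its $L_k$-invariance.

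It remains to verify that $W_k$ retains $L_j$-invariance for each $j<k$. Since $W_k=\sum_{i=0}^{m}L_k^i(W_{k-1})$, for any $j<k$ commutativity gives
\[
L_j\bigl(L_k^i(W_{k-1})\bigr)=L_k^i\bigl(L_j(W_{k-1})\bigr)\subseteq L_k^i(W_{k-1})\subseteq W_k,
\]
where the first inclusion uses the inductive hypothesis of $L_j$-invariance of $W_{k-1}$. Summing over $i$ closes the induction.

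The main obstacle is not computational but organizational: Lemma \ref{uno} alone produces invariance under one operator at a time, so a naive induction would lose $L_1,\ldots,L_{k-1}$-invariance upon passing to $W_k$. The fix is to strengthen the inductive statement and exploit the commutativity hypothesis twice, first to check the premise $L_k^m(W_{k-1})\subseteq W_{k-1}$ required by Lemma \ref{uno}, and second to propagate the earlier invariances through the new $\Box_{L_k}^m$ construction. Once the inductive hypothesis is formulated correctly, each verification reduces to moving $L_k^i$ past a single $L_j$.
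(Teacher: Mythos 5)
Your proof is correct and follows essentially the same route as the paper's: induction on the number of operators, using commutativity first to verify that the previously built space is $L_k^m$-invariant (so Lemma \ref{uno} applies) and then to propagate the earlier invariances through $\Box_{L_k}^m$. The only difference is presentational — you verify $L_k^m(W_{k-1})\subseteq W_{k-1}$ by writing out the spanning elements $L_{k-1}^{i_{k-1}}\cdots L_1^{i_1}(v)$ explicitly, whereas the paper pushes $L_k^m$ past the nested $\Box$ operators one layer at a time.
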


\begin{proof}
We proceed by induction on $t$. For $t=1$ the result follows from Lemma \ref{uno}. Assume the result holds true for $t-1$. Obviously, 
\[
\diamond_{L_1,L_2,\cdots,L_t}^m(V)=  \Box_{L_t}^m(\diamond_{L_1,L_2,\cdots,L_{t-1}}^m(V)). 
\]
By definition, 
\begin{eqnarray*}
&\ & L_t^m(\diamond_{L_1,L_2,\cdots,L_{t-1}}^m(V)) \\ 
& \ & \ \ =  
L_t^m(\Box_{L_{t-1}}^m(\diamond_{L_1,L_2,\cdots,L_{t-2}}^m(V)))\\
& \ & \ \ =   L_t^m(\diamond_{L_1,L_2,\cdots,L_{t-2}}^m(V)+L_{t-1}(\diamond_{L_1,L_2,\cdots,L_{t-2}}^m(V))+\cdots+
L_{t-1}^m(\diamond_{L_1,L_2,\cdots,L_{t-2}}^m(V)))\\
& \ & \ \ =   L_t^m(\diamond_{L_1,L_2,\cdots,L_{t-2}}^m(V))+L_t^m(L_{t-1}(\diamond_{L_1,L_2,\cdots,L_{t-2}}^m(V))) \\
& \ & \ \  \ \  \  \  \  \   +\cdots+
L_t^m(L_{t-1}^m(\diamond_{L_1,L_2,\cdots,L_{t-2}}^m(V)))\\
& \ & \ \ =   L_t^m(\diamond_{L_1,L_2,\cdots,L_{t-2}}^m(V))+L_{t-1}(L_t^m(\diamond_{L_1,L_2,\cdots,L_{t-2}}^m(V))) \\
& \ & \  \  \ \  \  \  \  \    +\cdots+
L_{t-1}^m(L_t^m(\diamond_{L_1,L_2,\cdots,L_{t-2}}^m(V)))\\
& \ & \ \ =   \Box_{L_{t-1}}^m(L_t^m(\diamond_{L_1,L_2,\cdots,L_{t-2}}^m(V))),
\end{eqnarray*}
since  $L_t,L_{t-1}$  commute. Repeating this process, we get
\begin{eqnarray*}
&\ & L_t^m(\diamond_{L_1,L_2,\cdots,L_{t-1}}^m(V)) \\ 
& \ & \ \ =   \Box_{L_{t-1}}^m(L_t^m(\diamond_{L_1,L_2,\cdots,L_{t-2}}^m(V))) \\
& \ & \ \ =   \Box_{L_{t-1}}^m(  \Box_{L_{t-2}}^m(L_t^m(\diamond_{L_1,L_2,\cdots,L_{t-3}}^m(V)))) \\
& \ & \ \  \vdots \\
& \ & \ \ =   \Box_{L_{t-1}}^m(  \Box_{L_{t-2}}^m( \cdots  (\Box_{L_{1}}^m( L_t^m(V)))\cdots )) \\
& \ & \ \  \subseteq   \Box_{L_{t-1}}^m(  \Box_{L_{t-2}}^m( \cdots  (\Box_{L_{1}}^m( V))\cdots )) \\
& \ & \ \ = \diamond_{L_1,L_2,\cdots,L_{t-1}}^m(V), 
\end{eqnarray*}
since $L_tL_i=L_iL_t$ for all $i<t$ and $L_t^m(V)\subseteq V$. This proves that $\diamond_{L_1,L_2,\cdots,L_{t-1}}^m(V)$ is $L_t^m$-invariant, and Lemma \ref{uno} implies that $\diamond_{L_1,L_2,\cdots,L_t}^m(V)=  \Box_{L_t}^m(\diamond_{L_1,L_2,\cdots,L_{t-1}}^m(V))$ is $L_t$-invariant. 

On the other hand, given $i<t$, the identity $L_tL_i=L_iL_t$, and the fact that $$L_i(\diamond_{L_1,L_2,\cdots,L_{t-1}}^m(V))\subseteq \diamond_{L_1,L_2,\cdots,L_{t-1}}^m(V)$$ (which follows from the hypothesis of induction) imply that
\begin{eqnarray*}
& \ & \ \ L_i(\diamond_{L_1,L_2,\cdots,L_t}^m(V)) \\
& \ & \ \ =    L_i(\Box_{L_t}^m(\diamond_{L_1,L_2,\cdots,L_{t-1}}^m(V)))\\
& \ & \ \ =   L_i(\diamond_{L_1,L_2,\cdots,L_{t-1}}^m(V)+ L_t(\diamond_{L_1,L_2,\cdots,L_{t-1}}^m(V) )+\cdots +L_t^m( \diamond_{L_1,L_2,\cdots,L_{t-1}}^m(V)) ) \\
& \ & \ \ =   L_i(\diamond_{L_1,L_2,\cdots,L_{t-1}}^m(V))+ L_t(L_i(\diamond_{L_1,L_2,\cdots,L_{t-1}}^m(V) ))+\cdots +L_t^m(L_i( \diamond_{L_1,L_2,\cdots,L_{t-1}}^m(V)) ) \\
& \ & \ \  \subseteq    \diamond_{L_1,L_2,\cdots,L_{t-1}}^m(V)+ L_t(\diamond_{L_1,L_2,\cdots,L_{t-1}}^m(V))+\cdots +L_t^m(\diamond_{L_1,L_2,\cdots,L_{t-1}}^m(V) ) \\
& \ & \ \ =   \Box_{L_t}^m(\diamond_{L_1,L_2,\cdots,L_{t-1}}^m(V))\\
& \ & \ \ =  \diamond_{L_1,L_2,\cdots,L_t}^m(V),
\end{eqnarray*} 
so  that $\diamond_{L_1,L_2,\cdots,L_t}^m(V)$ is $L_i$-invariant for all $i\leq t$. Finally, it is clear that $V\subseteq \Box_{L_1}^m(V) \subseteq \cdots \subseteq  \diamond_{L_1,L_2,\cdots,L_t}^m(V)$.
\end{proof}

It is important to note that there are many examples of linear transformations $T:E\to E$ such that $T$ and $T^m$ have different sets of invariant subspaces. For example, if $T$ is not of the form $T=\lambda I$ for any scalar $\lambda$ and satisfies $T^m=I$ or $T^m=0$, then all subspaces of $E$ are invariant subspaces of $T^m$  and, on the other hand, there exists $v\in E$ such that $Tv\not\in \mathbf{span}\{v\}$, so that $\mathbf{span}\{v\}$ is not an invariant subspace of $T$.  On the other hand, as the following lemma proves, sometimes it is possible to show that $T$ and $T^m$ share the same set of invariant subspaces.

\begin{lemma} \label{nuevo} Let  $\mathbb{K}$ be a field and  $E$ be a  $\mathbb{K}$-vector space with basis  $\beta=\{v_k\}_{k=1}^n$ and let $m\in\mathbb{N}$, $m\geq 1$. Assume that $T:E\to E$ is such that $A=M_{\beta}(T)$ is of the form $A=\lambda I+B$, where $\lambda\in \mathbb{C}$ and $B$ is strictly upper triangular with nonzero entries in the first superdiagonal. Then the full list of $T$-invariant subspaces of $E$ is given by $V_0=\{0\}$ and $V_k=\mathbf{span}\{v_1,\cdots,v_k\}$, $k=1,2,\cdots,n$. Furthermore, if $\lambda \neq 0$, then $T^m$ has the same invariant subspaces as $T$. \end{lemma}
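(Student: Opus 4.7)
I would split the argument in two steps, the second reducing to the first. In step one I classify the $T$-invariant subspaces by working with the nilpotent operator $N:=T-\lambda I$, which has matrix $B$ in the basis $\beta$. Since $\lambda I$ preserves every subspace, the $T$-invariant and $N$-invariant subspaces coincide, so it is enough to analyze $N$. Strict upper triangularity of $B$ gives $N(V_k)\subseteq V_{k-1}$, and the hypothesis that the first superdiagonal of $B$ is nowhere zero refines this as follows: whenever $u=\sum_{i=1}^{k}d_iv_i\in V_k$ with $d_k\neq 0$, the image $Nu$ equals $d_kB_{k-1,k}v_{k-1}$ plus a term in $V_{k-2}$, and $d_kB_{k-1,k}\neq 0$. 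Thus $N$ strictly decreases the ``top index'' of any nonzero vector by exactly one, with no loss of the top coefficient.

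That every $V_k$ is $T$-invariant is immediate since $Tv_j\in V_j\subseteq V_k$ for $j\leq k$. Conversely, let $W\neq\{0\}$ be $T$-invariant and set $k:=\min\{j:W\subseteq V_j\}$; choose $w\in W\setminus V_{k-1}$. By the refined statement above, $N^{k-1}w$ is a nonzero scalar multiple of $v_1$, so $v_1\in W$. Then $N^{k-2}w\in W$ has the form $cv_2+c'v_1$ with $c\neq 0$, which together with $v_1\in W$ gives $v_2\in W$. Iterating this descent yields $v_1,\ldots,v_k\in W$, so $W\supseteq V_k$, hence $W=V_k$.

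For step two, I would reduce the assertion about $T^m$ to step one by showing that $T^m$ has a matrix of exactly the same shape. Expanding
\[
T^m=(\lambda I+N)^m=\lambda^m I+C, \qquad C=\sum_{j=1}^{m}\binom{m}{j}\lambda^{m-j}N^j,
\]
one sees immediately that $C$ is strictly upper triangular. The decisive point is the first superdiagonal of $C$: a short combinatorial check shows $(N^j)_{i,i+1}=0$ for every $j\geq 2$, because any nonzero product of strictly upper triangular matrices pushes its first live superdiagonal one step further from the main diagonal. Consequently $C_{i,i+1}=m\lambda^{m-1}B_{i,i+1}$, which is nonzero precisely because $\lambda\neq 0$. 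Step one applied to $T^m$ (with scalar $\lambda^m$ and nilpotent part $C$) then produces the identical list $V_0,V_1,\ldots,V_n$. The one substantive issue, and the only place where the hypothesis $\lambda\neq 0$ enters, is this first-superdiagonal calculation; the remainder of the proof is essentially bookkeeping.
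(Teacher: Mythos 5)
Your proof is correct and follows essentially the same route as the paper: both classify the invariant subspaces by applying the shift $N=T-\lambda I$ to drive down the top index of a vector with nonzero leading coefficient, and both handle $T^m$ by expanding $(\lambda I+B)^m$ and observing that only the term $m\lambda^{m-1}B$ contributes to the first superdiagonal, which is where $\lambda\neq 0$ is used. No substantive differences to report.
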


\begin{proof} Assume that $A=M_{\beta}(T)$ is of the form $A=\lambda I+B$, where $\lambda\neq 0$ and $B$ is strictly upper triangular with nonzero entries in the first superdiagonal, and let $V\neq \{0\}$ be a $T$-invariant subspace.  Let $v\in V$, $v=a_1v_1+\cdots+a_sv_s$, $a_s\neq 0$. Then $w=Tv-\lambda v\in V$ and a simple computation shows that $w=\alpha_1v_1+\cdots+\alpha_{s-1}v_{s-1}$ with $\alpha_{s-1}=b_{s-1,s}a_s\neq 0$, where $B=(b_{ij})_{i,j=1}^n$. It follows that, if $V$ is $T$-invariant and  $v=a_1v_1+\cdots+a_sv_s\in V$ with $a_s\neq 0$, then $\mathbf{span}\{v_1,v_2,\cdots,v_s\}\subseteq V$. Take $k_0=\max\{k:\text{ exists }v\in V, v=a_1v_1+\cdots+a_sv_s\text{ and }a_s\neq 0\}$. Then $V=\mathbf{span}\{v_1,\cdots,v_{k_0}\}$. Finally, it is clear that all the spaces $V_k=\mathbf{span}\{v_1,\cdots,v_k\}$, $k=1,2,\cdots,n$ are $T$-invariant.

Let us now assume that $\lambda\neq 0$. To compute the invariant subspaces of $T^m$ we take into account that $A^m=M_{\beta}(T^m)$ and 
\begin{eqnarray*}
A^m &=& (\lambda I+B)^m\\
&=& \sum_{k=0}^m\binom{m}{k}(-1)^{m-k}B^k\\
&=& \lambda^mI+m\lambda^{m-1}B+\sum_{k=2}^m\binom{m}{k}(-1)^{m-k}B^k.
\end{eqnarray*}
This shows that $A^m=\lambda^mI+C$, with $C$ strictly upper triangular  with nonzero entries in the first superdiagonal, since the only contribution  to the first superdiagonal  of  $C=m\lambda^{m-1}B+\sum_{k=2}^m\binom{m}{k}(-1)^{m-k}B^k$ is got from $m\lambda^{m-1}B$, and $\lambda\neq 0$ . It follows that we can apply the first part of the lemma to the linear transformation $T^m$, which concludes the proof. 
\end{proof}

\begin{proposition}\label{VdentroW} Assume that $V$ is a finite dimensional subspace of $C(\mathbb{Z}^d,\mathbb{C})$, $\{h_1,\cdots,h_{t}\}\subset \mathbb{Z}^d$  and $h_1\mathbb{Z}+h_2\mathbb{Z}+\cdots+h_t\mathbb{Z}=\mathbb{Z}^d$. If $\Delta_{h_k}^m(V)\subseteq V$, $k=1,\cdots,t$,  then there exists a
finite dimensional subspace $W$ of  $C(\mathbb{Z}^d,\mathbb{C})$ which is invariant by translations and contains $V$. Consequently, all elements of $V$ are exponential polynomials, 
$f(n)=\sum_{k=0}^s(\sum_{|\alpha| \leq m_k}a_{k,\alpha}n^\alpha) \lambda_k^n$
\end{proposition}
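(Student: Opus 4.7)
The plan is to enlarge $V$ in two stages: first to a finite-dimensional subspace invariant under each individual operator $\Delta_{h_k}$ (not merely its $m$-th power), and then to observe that this enlarged space is automatically invariant under the translations $T_{h_k}$ and their inverses, hence under the full translation action of $\mathbb{Z}^d$. Once such a finite-dimensional translation-invariant enlargement exists, Corollary \ref{cor_uno} immediately yields the claimed exponential-polynomial representation for every element of $V$.

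For the first stage, I would take $L_k = \Delta_{h_k}$ for $k = 1, \ldots, t$. These operators commute pairwise on $C(\mathbb{Z}^d,\mathbb{C})$, because $\Delta_{h}\Delta_{h'} = (T_{h}-I)(T_{h'}-I) = T_{h+h'} - T_{h} - T_{h'} + I$ is visibly symmetric in $h$ and $h'$. The standing hypothesis $\Delta_{h_k}^m(V) \subseteq V$ is precisely the hypothesis of Lemma \ref{dos}, so the space
\[
W_0 := \diamond_{L_1,L_2,\cdots,L_t}^m(V)
\]
is a finite-dimensional subspace of $C(\mathbb{Z}^d,\mathbb{C})$ that contains $V$ and is invariant under each $\Delta_{h_k}$.

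For the second stage, I would use the identity $T_{h_k} = I + \Delta_{h_k}$ to conclude that $W_0$ is $T_{h_k}$-invariant for every $k$. The key step, and I expect the main obstacle, is to pass from this to invariance under $T_{-h_k}$. The trick is that $T_{h_k}$ is globally invertible on $C(\mathbb{Z}^d,\mathbb{C})$ with inverse $T_{-h_k}$, hence in particular injective, and it restricts to a linear endomorphism of the \emph{finite-dimensional} space $W_0$. An injective endomorphism of a finite-dimensional space is automatically surjective, so $T_{h_k}|_{W_0}$ is bijective and its inverse must coincide with $T_{-h_k}|_{W_0}$; in particular $T_{-h_k}(W_0) \subseteq W_0$. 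Because $\{h_1,\ldots,h_t\}$ generates $\mathbb{Z}^d$ as an abelian group, every $T_h$ with $h \in \mathbb{Z}^d$ is a finite composition of the maps $T_{\pm h_k}$, and therefore $W_0$ is invariant under all translations of $\mathbb{Z}^d$.

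Having produced a finite-dimensional translation-invariant $W_0$ containing $V$, I would finally apply Corollary \ref{cor_uno} to embed $W_0$ inside a space $W = \mathbf{span}\bigl(\bigcup_{k=0}^s \{n^\alpha \lambda_k^n : |\alpha|\leq m_k-1\}\bigr)$ for suitable $\lambda_k\in (\mathbb{C}\setminus\{0\})^d$ and $m_k\in\mathbb{N}$, which displays every $f \in V \subseteq W_0 \subseteq W$ in the required exponential-polynomial form $f(n) = \sum_{k=0}^s\bigl(\sum_{|\alpha|\leq m_k} a_{k,\alpha}\, n^\alpha\bigr)\lambda_k^n$.
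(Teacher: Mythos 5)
Your proof is correct and follows essentially the same route as the paper: apply Lemma \ref{dos} with $L_k=\Delta_{h_k}$ to obtain a finite-dimensional $\Delta_{h_k}$-invariant enlargement of $V$, deduce translation invariance from the fact that $h_1\mathbb{Z}+\cdots+h_t\mathbb{Z}=\mathbb{Z}^d$, and conclude via Corollary \ref{cor_uno}. The one point you develop more carefully than the paper --- passing from $T_{h_k}$-invariance to $T_{-h_k}$-invariance by noting that an injective endomorphism of the finite-dimensional space $W_0$ is surjective --- is exactly the detail the paper leaves implicit, and your treatment of it is correct.
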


\begin{proof} We apply Lemma \ref{dos} with $E=C(\mathbb{Z}^d,\mathbb{C})$, $L_i=\Delta_{h_i}$, $i=1,\cdots,t$, to conclude that 
$V\subseteq W=\diamond_{\Delta_{h_1},\Delta_{h_2},\cdots,\Delta_{h_t}}^m(V)$ and $W$ is a finite dimensional subspace of $\mathbf{C}(\mathbb{Z}^d)$ satisfying $\Delta_{h_i}(W)\subseteq W$ , $i=1,2,\cdots,t$. Hence $W$ is invariant by translations, since $h_1\mathbb{Z}+h_2\mathbb{Z}+\cdots+h_t\mathbb{Z}=\mathbb{Z}^d$. Applying  Corollary \ref{cor_uno}, we conclude that  all elements of $W$ (hence, also all elements of $V$) are exponential polynomials.
\end{proof}
 
\begin{theorem}[Discrete Montel's Theorem] \label{discretemonteltheorem} Let $\{h_1,\cdots,h_{t}\}\subset \mathbb{Z}^d$  be such that $h_1\mathbb{Z}+h_2\mathbb{Z}+\cdots+h_t\mathbb{Z}=\mathbb{Z}^d$, and let $f\in C(\mathbb{Z}^d,\mathbb{C})$. If $\Delta_{h_k}^m(f) =0$, $k=1,\cdots,t$, then  $f(n)=\sum_{|\alpha|<N}a_{\alpha}n^{\alpha}$ for some $N\in\mathbb{N}$, some complex numbers $a_{\alpha}$, and all $n\in\mathbb{Z}^d$. In other words:  $f$ is an ordinary polynomial on $\mathbb{Z}^d$. Furthermore, if $d=1$, then $f(n)=a_0+a_1n+\cdots+a_{m-1}n^{m-1}$ is an ordinary polynomial on $\mathbb{Z}$, of degree $\leq m-1$.
\end{theorem}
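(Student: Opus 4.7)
The plan is to apply Proposition \ref{VdentroW} to a one-dimensional subspace and then use linear independence of exponential monomials to strip away the non-trivial exponential factors.

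First I would set $V=\mathrm{span}\{f\}\subseteq C(\mathbb{Z}^d,\mathbb{C})$. This is finite dimensional, and the hypothesis $\Delta_{h_k}^m f=0$ gives $\Delta_{h_k}^m(V)=\{0\}\subseteq V$ for each $k=1,\dots,t$. Proposition \ref{VdentroW} then yields an expansion
\[
f(n)=\sum_{k=0}^{s}p_k(n)\lambda_k^{n}
\]
where $p_k\in\Pi_{m_k}^{d}$, $\lambda_0=(1,\dots,1)$, and $\lambda_k\in(\mathbb{C}\setminus\{0\})^d\setminus\{\lambda_0\}$ for $k\geq 1$.

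Next I would analyze how $\Delta_h^m$ acts on a single exponential monomial. A direct computation gives
\[
\Delta_h\bigl(p(n)\lambda^{n}\bigr)=\bigl(\lambda^{h}p(n+h)-p(n)\bigr)\lambda^{n},
\]
so $\Delta_h^m(p(n)\lambda^n)=(T_{\lambda,h}^{m}p)(n)\lambda^n$ for the linear operator $T_{\lambda,h}p(n):=\lambda^{h}p(n+h)-p(n)$ acting on polynomials. When $p(n)=an^{\beta}+\text{lower degree}$, the leading term of $T_{\lambda,h}p$ equals $(\lambda^{h}-1)an^{\beta}+\text{lower}$. Hence, provided $\lambda^{h}\neq 1$, $T_{\lambda,h}$ preserves total degree and is injective on $\Pi_{N}^{d}$ for every $N$; in particular $T_{\lambda,h}^{m}p=0$ forces $p\equiv 0$. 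By the standard linear independence of functions $n\mapsto q_j(n)\mu_j^{n}$ with pairwise distinct bases $\mu_j$, the equation $\Delta_{h_k}^m f=0$ decouples into $T_{\lambda_j,h_k}^{m}p_j=0$ for every index $j$ and every $k$.

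It remains to discard the terms with $j\geq 1$. If $\lambda_j\neq(1,\dots,1)$, some coordinate satisfies $(\lambda_j)_i\neq 1$; since $h_1\mathbb{Z}+\cdots+h_t\mathbb{Z}=\mathbb{Z}^d$, the standard basis vector $e_i$ is an integer combination of the $h_k$, so if $\lambda_j^{h_k}=1$ held for every $k$, we would conclude $\lambda_j^{e_i}=(\lambda_j)_i=1$, a contradiction. Choose $k$ with $\lambda_j^{h_k}\neq 1$; the injectivity established above forces $p_j\equiv 0$. Therefore $f(n)=p_0(n)$ is an ordinary polynomial with complex coefficients.

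For the final statement ($d=1$), I would just use that $\Delta_h$ decreases the degree of an ordinary polynomial by exactly one whenever $h\neq 0$. Thus $\Delta_{h_1}^{m}p_0=0$ combined with $p_0\in\Pi_{\infty}$ forces $\deg p_0\leq m-1$. The main conceptual obstacle is the interplay between the arithmetic condition $h_1\mathbb{Z}+\cdots+h_t\mathbb{Z}=\mathbb{Z}^d$ and the vanishing of $\lambda^{h_k}-1$; everything else is routine once Proposition \ref{VdentroW} and the action of $\Delta_h$ on exponential monomials are in hand.
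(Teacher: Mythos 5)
Your proposal is correct and follows the same overall route as the paper: reduce to Proposition \ref{VdentroW}, split $f$ into exponential--monomial blocks attached to distinct $\lambda_k$, observe that $\Delta_h$ acts on the block attached to $\lambda$ as a degree-preserving (hence injective) operator whenever $\lambda^h\neq 1$, and then use the arithmetic hypothesis to find, for each $\lambda_j\neq(1,\dots,1)$, some $h_k$ with $\lambda_j^{h_k}\neq 1$. The one place where you genuinely diverge is this last step: the paper writes each $\lambda_k$ in polar form and argues that the vector $w_k$ of logarithms of the moduli is nonzero and cannot be orthogonal to all of the $h_j$ --- an argument that, as written, only covers the case where some coordinate of $\lambda_k$ has modulus different from $1$ --- whereas you simply use that $h\mapsto\lambda^h$ is a homomorphism from $\mathbb{Z}^d$ to $\mathbb{C}\setminus\{0\}$, so that $\lambda^{h_k}=1$ for every $k$ together with $h_1\mathbb{Z}+\cdots+h_t\mathbb{Z}=\mathbb{Z}^d$ would force $\lambda^{e_i}=(\lambda)_i=1$ for every $i$. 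Your version is shorter and treats all $\lambda\neq(1,\dots,1)$ uniformly, including the unimodular ones that the paper's write-up glosses over. For $d=1$ you replace the paper's explicit computation of $\ker(A^m)$ for the triangular matrix of $\Delta_h$ on $\Pi_{m_0}$ by the equivalent observation that $\Delta_h$ lowers the degree of a nonconstant polynomial by exactly one; just take care to apply this with a nonzero step (some $h_k\neq 0$ exists because the $h_k$ generate $\mathbb{Z}$), since $\Delta_0^m f=0$ carries no information.
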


\begin{proof} Assume that  $\Delta_{h_k}^m(f) =0$, $k=1,\cdots,t$. Then $V=\mathbf{span}\{f\}$ is a one dimensional subspace of $C(\mathbb{Z}^d,\mathbb{C})$ which   
satisfies the hypotheses of Proposition \ref{VdentroW}. Hence all elements of $V$ are exponential polynomials. In particular, $f$ is an exponential polynomial,
\begin{equation} \label{efe}
f(n)=\sum_{k=0}^s(\sum_{|\alpha| \leq m_k}a_{k,\alpha}n^\alpha) \lambda_k^n
\end{equation}
and we can assume, with no loss of generality,  that $\lambda_0=(1,1,\cdots,1)$, $m_0\geq m-1$, and $\lambda_i\neq\lambda_j$ for all $i\neq j$. 

Let 
\begin{equation} \label{base}
\beta = \{
 n^{\alpha}\lambda_k^n, \ \  0\leq |\alpha|\leq m_k \text{ and } k=0,1,2,\cdots,s\} 
\end{equation}
and $\mathcal{E}=\mathbf{span}\{\beta\}$ be an space with a basis of the form \eqref{base} which contains $V$.  Let us consider the linear map $\Delta_{h}:\mathcal{S}\to\mathcal{S}$ induced by the operator $\Delta_h$ when restricted to $\mathcal{E}$.  Obviously, $\mathcal{E}=P \oplus E_1\oplus E_2\oplus \cdots\oplus E_s$, where 
\[
 P=\mathbf{span}\{n^{\alpha}\}_{0\leq |\alpha|\leq m_0} \text{ and } E_k=\mathbf{span}\{n^{\alpha}\lambda_k^n\} _{0\leq |\alpha|\leq m_k},  \  k=1,2,\cdots,s. 
\]
Furthermore, $\Delta_h(P)\subseteq P$ and $\Delta_h(E_k)\subseteq E_k$ for $k=1,2,\cdots,s$, since
\[
q_{\alpha}(n)=\Delta_hn^{\alpha}=(n+h)^\alpha-n^\alpha
\]
is a polynomial of degree $\leq |\alpha|-1$ and 
\[
\Delta_h(n^{\alpha}\lambda_k^n)=((n+h)^\alpha \lambda_k^h-n^\alpha)\lambda_k^n=(n^\alpha( \lambda_k^h-1)+q_{\alpha}(n))\lambda_k^n.
\]
It follows that, for any $h\in\mathbb{Z}^d$, the operator $\Delta_h^m$ also satisfies $\Delta_h(P)\subseteq P$ and $\Delta_h(E_k)\subseteq E_k$ for $k=1,2,\cdots,s$, so that, if $g\in\mathcal{E}$, then 
$\Delta_h^mg=0$ if and only if $\Delta_h^mp=0$, $\Delta_h^mb_k=0$, $k=1,\cdots,s$, where $g=p+b_1+\cdots+b_s$, $p\in P$, $b_k\in E_k$, $k=1,\cdots,s$. 

Let us fix $k\in \{1,\cdots,s\}$ and let us consider the operator $(\Delta_h)_{|E_k}:E_k\to E_k$. The matrix $A_k$ associated to this operator with respect to the basis $\beta_k= \{n^{\alpha}\lambda_k^n\} _{0\leq |\alpha|\leq m_k}$, which we consider ordered by the graduated lexicographic order, 
\[
n^{\alpha}\lambda_k^n\leq_{grlex} n^{\gamma}\lambda_k^n \Leftrightarrow \left( |\alpha|\leq |\gamma| \text{ or }(|\alpha|=|\gamma| \text{ and } \alpha\leq_{lex} \gamma) \right),
\] 
is upper triangular and the terms in the diagonal are all equal to  $d_k(h)=\lambda_k^h-1$ (Recall that $ \alpha\leq_{lex} \gamma$ if and only if $\alpha_{k_0}-\gamma_{k_0}<0$, where $k_0=\max\{k\in\{1,2,\cdots,d\}:\alpha_k\neq \gamma_k\}$). Obviously, these computations imply that, if $d_k(h)\neq 0$, then $(\Delta_h)_{|E_k}$ is invertible and, in particular, $\Delta_hb_k=0$ and $b_k\in E_k$ imply $b_k=0$. 

Let us now study the equations $d_k(h_j)=0$, with  $h_j=(h_{j,1},\cdots,h_{j,d})$, $j=1,\cdots,t$,  being the vectors fixed by the hypotheses of the theorem. We know that $$\lambda_k=(\rho_{k,1}e^{2\pi \mathbf{i} \theta_{k,1}},\rho_{2,k}e^{2\pi \mathbf{i} \theta_{2,k}},\cdots,\rho_{k,d}e^{2\pi \mathbf{i} \theta_{k,d}})\in(\mathbb{C}\setminus\{0\})^d$$ with $\rho_{k,d}\neq 1$ for at least one of the values $k$, since $\lambda_k\neq (1,1,\cdots,1)$. Hence $d_k(h_j)=0$ if and only if 
\[
(\rho_{k,1}e^{2\pi \mathbf{i} \theta_{k,1}})^{h_{j,1}}(\rho_{k,2}e^{2\pi \mathbf{i} \theta_{k,2}})^{h_{j,2}}\cdots(\rho_{k,d}e^{2\pi \mathbf{i} \theta_{k,d}})^{h_{j,d}}=1,
\]
which is equivalent to the system of relations
\[
\left \{
\begin{array}{cccccc}
h_{j,1}\log\rho_{k,1}+h_{j,2}\log\rho_{k,2}+\cdots+h_{j,d}\log\rho_{k,d} = 0\\
 \theta_{k,1}h_{j,1}+\theta_{k,2}h_{j,2}+\cdots+\theta_{k,d}h_{j,d}\in \mathbb{Z}
 \end{array}\right. .
\]
Now, $h_1\mathbb{Z}+h_2\mathbb{Z}+\cdots+h_t\mathbb{Z}=\mathbb{Z}^d$, so that 
$\{h_1,h_2,\cdots,h_t\}$ contains a basis of $\mathbb{R}^d$. Furthermore,  $w_k=(\log\rho_{k,1},\log\rho_{k,2},\cdots,\log\rho_{k,d} )\in \mathbb{R}^d\setminus \{(0,0,\cdots,0)\}$. This implies that there exists $j_k\in \{1,\cdots,t\}$ such that $h_{j_k}$ is not orthogonal to  $w_k$. In particular, $d_k(h_{j_k})\neq 0$ and $(\Delta_{h_{j_k}})_{|E_k}$ is invertible. 

Consider the function $f$ given by $\eqref{efe}$. Then $f=p_0+b_1+\cdots+b_s\in\mathcal{E}$ (with $p_0 = \sum_{|\alpha| \leq m_0}a_{0,\alpha}n^\alpha\in P$ and $b_k=(\sum_{|\alpha| \leq m_k}a_{k,\alpha}n^\alpha) \lambda_k^n\in E_k$, $k=1,\cdots, s$) and $\Delta_{h_j}^mf=0$ for all $j$. For every $k\in 1,\cdots,s$ we have that $\Delta_{h_{j_k}}^mb_k=0$, which implies $b_k=0$, since    $(\Delta_{h_{j_k}})_{|E_k}$ is invertible. Hence $f=p_0$, which  proves the first part of the theorem. 

Let us now assume that $d=1$. We know that $f(n)=a_0+a_1n+\cdots+a_{m_0}n^{m_0}$ is an ordinary polynomial (with $m_0\geq m-1$) and we want to demonstrate that $\deg(f)\leq m-1$. To prove this assertion we fix our attention on the matrix $A$ associated to $\Delta_h:\Pi_{m_0}\to\Pi_{m_0}$ with respect to the basis $\beta_0=\{n^k\}_{k=0}^{m_0}$.  A simple computation shows that 
\begin{equation}\label{A0}
A=\left [
\begin{array}{cccccc}
0 & h & h^2 & \cdots & h^{m_0} \\
0 & 0 & 2h & \cdots & \binom{m_0}{1}h^{m_0-1}\\
\vdots & \vdots & \ddots & \cdots & \vdots \\
0 & 0 & 0 & \cdots &  \binom{m_0}{m_0-1}h\\
0 & 0 & 0 & \cdots &  0
\end{array} \right],
\end{equation}
so that the matrix associated to $(\Delta_h^m)_{|\Pi_{m_0}}$ with respect to the basis $\beta_0$ is given by $A^m$.
Now, $$\mathbf{ker}(A^m)=\mathbf{span}\{(0,0,\cdots,0,1^{\text{(i-th position)}},0,\cdots, 0): i=1,2,\cdots,m\}.$$ Hence 
$\mathbf{rank}(A^m)=m_0+1-m=\dim_{\mathbb{C}}(\Pi_{m_0})-m$ and $\dim_{\mathbb{C}} \mathbf{ker}(A^m)=m$. On the other hand, another simple computation shows that the space of ordinary polynomials of degree $\leq m-1$,  $\Pi_{m-1}$, is contained into 
$\mathbf{ker}(\Delta_h^m)$.  Hence $\mathbf{ker}(\Delta_h^m)=\Pi_{m-1}$, since both spaces have the same dimension. This, in conjunction with  $f\in \mathbf{ker}(\Delta_h^m)$, ends the proof. 

\end{proof}

\begin{corollary}
Let us assume that $h_1,h_2\in\mathbb{Z}$ are coprime numbers. If   $f\in C(\mathbb{Z},\mathbb{C})$ satisfies $\Delta_{h_k}^m(f) =0$, $k=1,2$, then $f(n)=a_0+a_1n+\cdots+a_{m-1}n^{m-1}$ is an ordinary polynomial of degree $\leq m-1$.
\end{corollary}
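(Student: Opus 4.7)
The plan is to observe that this corollary is an immediate specialization of the second half of Theorem \ref{discretemonteltheorem} (the Discrete Montel's Theorem) to the case $d=1$ and $t=2$. The only thing to verify is that the coprimality hypothesis on $h_1,h_2\in\mathbb{Z}$ is exactly what is needed to satisfy the density/generation condition $h_1\mathbb{Z}+h_2\mathbb{Z}=\mathbb{Z}^d$ that appears in the hypotheses of that theorem.

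First I would invoke Bézout's identity: since $\gcd(h_1,h_2)=1$, there exist integers $a,b$ with $ah_1+bh_2=1$. This immediately yields $1\in h_1\mathbb{Z}+h_2\mathbb{Z}$, and hence $h_1\mathbb{Z}+h_2\mathbb{Z}=\mathbb{Z}$, which is precisely the generation condition required in Theorem \ref{discretemonteltheorem} with $d=1$, $t=2$.

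Next I would simply apply the $d=1$ part of Theorem \ref{discretemonteltheorem} to $f$: since $\Delta_{h_k}^m(f)=0$ for $k=1,2$, the theorem guarantees that $f(n)=a_0+a_1n+\cdots+a_{m-1}n^{m-1}$ is an ordinary polynomial of degree at most $m-1$, which is exactly the conclusion sought.

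There is essentially no obstacle here beyond recognizing the reduction; the substantive content (in particular, the sharp degree bound $\leq m-1$ rather than the weaker bound $m_0$ coming from the exponential polynomial representation) is already contained in the last part of the proof of Theorem \ref{discretemonteltheorem}, where the kernel of $\Delta_h^m$ acting on $\Pi_{m_0}$ was computed to be exactly $\Pi_{m-1}$.
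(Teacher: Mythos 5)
Your proposal is correct and coincides with the paper's own proof: both invoke B\'{e}zout's identity to get $h_1\mathbb{Z}+h_2\mathbb{Z}=\mathbb{Z}$ and then apply the $d=1$ case of Theorem \ref{discretemonteltheorem}. Nothing is missing.
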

\begin{proof} If $h_1,h_2$ are coprime then, by B\'{e}zout's identity, there exists $a,b\in\mathbb{Z}$ such that $1=ah_1+bh_2$. Hence $h_1\mathbb{Z}+h_2\mathbb{Z}=\mathbb{Z}$ and the result follows from Theorem \ref{discretemonteltheorem}.
\end{proof}

The estimation of the degree of $f$ in Theorem \ref{discretemonteltheorem} when $d>1$ can be achieved in certain special cases. To prove a result of this type, we introduce the following technical result.

\begin{lemma} \label{gradopolinomios}
Let $p(x)=\sum_{|\alpha|\leq N}a_{\alpha}x^{\alpha}\in \mathbb{C}[x_1,x_2,\cdots,x_d]$ be a complex polynomial of $d$ complex variables with total degree $\leq N$. Assume that, for all $a=(a_1,\cdots, a_{i-1},a_{i+1},\cdots,a_d)\in\mathbb{C}^{d-1}$ and all $i\in\{1,\cdots,d\}$, the polynomial $g_{a,i}(t)=p(a_1,\cdots, a_{i-1},t,a_{i+1},\cdots,a_d)$ satisfies $g_{a,i}\in \Pi_m$. Then $N\leq md$. Furthermore, the extremal case is attained for $p(x)=x_1^mx_2^m\cdots x_d^m$.
\end{lemma}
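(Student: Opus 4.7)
The plan is to convert the hypothesis — that every one-variable slice of $p$ has degree at most $m$ — into a bound on the individual partial degrees $\alpha_i$ of each monomial $x^\alpha$ appearing in $p$, and then sum to bound the total degree.

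First, I would fix $i\in\{1,\dots,d\}$ and rewrite
\[
p(x)=\sum_{k=0}^{N}q_k^{(i)}(x_1,\dots,x_{i-1},x_{i+1},\dots,x_d)\,x_i^k,
\]
where each $q_k^{(i)}\in\mathbb{C}[x_1,\dots,\widehat{x_i},\dots,x_d]$ collects the monomials of $p$ for which the exponent of $x_i$ equals $k$. By definition of $g_{a,i}(t)$, the coefficient of $t^k$ in $g_{a,i}(t)$ equals $q_k^{(i)}(a_1,\dots,\widehat{a_i},\dots,a_d)$. The hypothesis $g_{a,i}\in\Pi_m$ for every $a\in\mathbb{C}^{d-1}$ then forces $q_k^{(i)}(a)=0$ for all $a\in\mathbb{C}^{d-1}$ whenever $k>m$. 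Since $\mathbb{C}$ is infinite, a polynomial that vanishes identically as a function is the zero polynomial, so $q_k^{(i)}\equiv 0$ for all $k>m$. Therefore no monomial of $p$ has $x_i$-degree greater than $m$.

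Next, since this conclusion applies for every index $i\in\{1,\dots,d\}$, every multiindex $\alpha$ with $a_\alpha\neq 0$ satisfies $\alpha_i\leq m$ for all $i$. Summing yields $|\alpha|=\alpha_1+\cdots+\alpha_d\leq md$, so the total degree $N$ of $p$ is at most $md$, which is the first claim.

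For the extremal case, I would just check the polynomial $p(x)=x_1^m x_2^m\cdots x_d^m$: its total degree is exactly $md$, and for any fixed $i$ and any $a\in\mathbb{C}^{d-1}$, the slice $g_{a,i}(t)=a_1^m\cdots a_{i-1}^m\, t^m\, a_{i+1}^m\cdots a_d^m$ is a polynomial in $t$ of degree at most $m$ (and equal to $0$ when some $a_j=0$ for $j\neq i$), so $g_{a,i}\in\Pi_m$. Thus the hypothesis is satisfied and the bound $N\leq md$ is attained. There is no real obstacle here; the one step that would be worth mentioning carefully is the passage from ``the polynomial function $q_k^{(i)}$ vanishes on $\mathbb{C}^{d-1}$'' to ``$q_k^{(i)}$ is the zero polynomial'', which uses that $\mathbb{C}$ is infinite (and an easy induction on the number of variables).
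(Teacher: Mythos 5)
Your proof is correct and is essentially the paper's argument: both expand $p$ in powers of $x_i$, use that a polynomial over $\mathbb{C}$ vanishing at every point of $\mathbb{C}^{d-1}$ is the zero polynomial (the paper phrases this contrapositively, picking a point where a nonzero coefficient polynomial does not vanish), conclude that every exponent $\alpha_i$ is at most $m$, and sum to get $N\leq md$. The verification of the extremal example is the same routine check in both.
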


\begin{proof} Let $i\in \{1,\cdots,d\}$ be fixed and let us assume that $\alpha=(\alpha_1,\cdots,\alpha_d)$ satisfies $a_{\alpha}\neq 0$ and $\alpha_i\geq m+1$. Then 
\[
p(x)=\sum_{k=0}^Nb_k(x_1,x_2,\cdots,x_{i-1},x_{i+1},\cdots,x_d)x_i^k; \text{ and } b_{\alpha_i}(x_1,\cdots,x_{i-1},x_{i+1},\cdots,x_d)\neq 0.
\]
In particular, there exists $a=(a_1,\cdots, a_{i-1},a_{i+1},\cdots,a_d)\in\mathbb{C}^{d-1}$ such that $b_{\alpha_i}(a)\neq 0$ since $b_{\alpha_i}$ is a nonzero polynomial. It follows that 
$$g_{a,i}(t)=p(a_1,\cdots, a_{i-1},t,a_{i+1},\cdots,a_d)=\sum_{k=0}^Nb_k(a_1,a_2,\cdots,a_{i-1},a_{i+1},\cdots,a_d)t^k$$ 
is a polynomial of degree bigger than $m$, which contradicts our hypotheses. Hence $a_{\alpha}\neq 0$ implies $\max\{\alpha_1,\alpha_2,\cdots,\alpha_d\}\leq m$ and $N\leq md$.   
\end{proof}

\begin{corollary} \label{control_grado_discreto}
Let $e_i=(0,\cdots,0,1^{i-\text{th position}},0,\cdots,0)\in \mathbb{Z}^d$, $i=1,\cdots,d$ and let us assume that $f\in C(\mathbb{Z}^d,\mathbb{C})$ satisfies $\Delta_{e_i}^mf=0$ for $i=1,\cdots,d$. Then $f(n)=\sum_{|\alpha|\leq (m-1)d}a_{\alpha}n^{\alpha}$  for all $n\in\mathbb{Z}^d$ and certain complex values $a_{\alpha}$. Furthermore, the extremal case is attained for $f(n)=n_1^{m-1}n_2^{m-1}\cdots n_d^{m-1}$.
\end{corollary}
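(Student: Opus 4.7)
The plan is to reduce the $d$-dimensional problem to the one-variable Discrete Montel's Theorem along each coordinate axis and then invoke Lemma \ref{gradopolinomios} with the shift $m \to m-1$.

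First, since $\{e_1,\dots,e_d\}$ generates $\mathbb{Z}^d$, Theorem \ref{discretemonteltheorem} already gives that $f$ is an ordinary polynomial, $f(n)=\sum_{|\alpha|\leq N}a_{\alpha}n^{\alpha}$ for some $N\in\mathbb{N}$. Thus $f$ extends uniquely to a polynomial $\widetilde{f}\in\mathbb{C}[x_1,\dots,x_d]$ of total degree $\leq N$; our task is to show $N\leq (m-1)d$.

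Next, for each fixed index $i\in\{1,\dots,d\}$ and each fixed $n'=(n_1,\dots,n_{i-1},n_{i+1},\dots,n_d)\in\mathbb{Z}^{d-1}$, the one-variable function $g_{n',i}(t)=f(n_1,\dots,n_{i-1},t,n_{i+1},\dots,n_d)$ lies in $C(\mathbb{Z},\mathbb{C})$ and satisfies $\Delta^{m}g_{n',i}=0$, because $\Delta_{e_i}$ acts only on the $i$-th coordinate. Applying the one-dimensional part of Theorem \ref{discretemonteltheorem}, we obtain that $g_{n',i}$ is a polynomial in $t$ of degree $\leq m-1$.

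Now write $\widetilde{f}(x)=\sum_{k=0}^N b_k(x_1,\dots,\widehat{x_i},\dots,x_d)\, x_i^k$. The previous step tells us that $b_k(n')=0$ for every $n'\in\mathbb{Z}^{d-1}$ whenever $k\geq m$. Since $\mathbb{Z}^{d-1}$ is Zariski-dense in $\mathbb{C}^{d-1}$, the polynomial $b_k$ vanishes identically for $k\geq m$. Hence, for every $a\in\mathbb{C}^{d-1}$ and every $i$, the slice $t\mapsto \widetilde{f}(a_1,\dots,a_{i-1},t,a_{i+1},\dots,a_d)$ belongs to $\Pi_{m-1}$. Applying Lemma \ref{gradopolinomios} with $m$ replaced by $m-1$ yields $N\leq (m-1)d$, which is the desired bound.

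Finally, for the extremal example $f(n)=n_1^{m-1}n_2^{m-1}\cdots n_d^{m-1}$, note that $\Delta_{e_i}$ only affects the $i$-th factor $n_i^{m-1}$, and the $m$-th iterated difference of a one-variable polynomial of degree $m-1$ vanishes identically; thus $\Delta_{e_i}^m f=0$ for each $i$, while the total degree of $f$ equals $(m-1)d$. The only step that requires any care is the transition from integer slices to complex slices, but the Zariski-density argument above handles it cleanly, so I do not anticipate a genuine obstacle.
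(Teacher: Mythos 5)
Your proof is correct and follows essentially the same route as the paper: reduce to one-variable slices, show each slice lies in $\Pi_{m-1}$, and then invoke Lemma \ref{gradopolinomios} with $m$ replaced by $m-1$. Your explicit Zariski-density step for passing from integer slices to arbitrary complex slices is a point the paper glosses over (it asserts $(\phi_{a,i})_{|\mathbb{Z}}=0$ for complex $a$ without comment), so your version is, if anything, slightly more careful.
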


\begin{proof} It follows from $e_1\mathbb{Z}+\cdots+e_d\mathbb{Z}=\mathbb{Z}^d$ and Theorem \ref{discretemonteltheorem} that  $f(n)=\sum_{|\alpha|<N}a_{\alpha}n^{\alpha}$ for some $N\in\mathbb{N}$, some complex numbers $a_{\alpha}$, and all $n\in\mathbb{Z}^d$. Let us introduce the polynomial $p(x)=\sum_{|\alpha|<N}a_{\alpha}x^{\alpha} \in \mathbb{C}[x_1,x_2,\cdots,x_d]$, and let us take $i\in\{1,\cdots,d\}$ and $a=(a_1,\cdots, a_{i-1},a_{i+1},\cdots,a_d)\in\mathbb{C}^{d-1}$.  Then $q_{a,i}(t)=p(a_1,\cdots, a_{i-1},t,a_{i+1},\cdots,a_d)$ is a polynomial in the complex variable $t$ and $\phi_{a,i}(x)=\Delta_{e_i}^{m}q_{a,i}(t)\in\mathbb{C}[t]$ satisfies $(\phi_{a,i})_{|\mathbb{Z}}=0$, so that $\phi_{a,i}=0$ and  $q_{a,i}$ is a polynomial of degree $\leq m-1$. The result follows by applying Lemma \ref{gradopolinomios} to $p$.  To prove the last claim of the corollary it is enough to check that $f(n)=n_1^{m-1}n_2^{m-1}\cdots n_d^{m-1}$ satisfies $\Delta_{e_i}^mf=0$ for $i=1,\cdots,d$, which is an easy exercise. 
\end{proof}

In \cite[Lemma 15.9.4.]{kuczma} it is proved that, if $f:\mathbb{R}^d\to \mathbb{R}$ is an ordinary polynomial separately in each one of its variables, and the partial degrees of $f$ are uniformly bounded by a certain natural number $m$, independently of the concrete variable and independently of the values of the other variables, then   $f$ is itself an ordinary polynomial. Corollary \ref{control_grado_discreto} proves that  an analogous result holds for functions $f:\mathbb{Z}^d\to \mathbb{C}$ and gives a concrete upper bound for the total degree of $f$. In \cite[Theorem 14]{prager} the authors proved that, if $\mathbb{K}$ is a field and $f:\mathbb{K}^d\to \mathbb{K}$  is an ordinary polynomial separately in each variable (but without any other assumption about the degrees of the polynomials appearing in this way), then $f$ is an ordinary polynomial jointly in all its variables, provided that $\mathbb{K}$ is finite or uncountable. Furthermore, for the case of $\mathbb{K}$ with infinite countable cardinal, they constructed a function $\chi:\mathbb{K}^d\to \mathbb{K}$ which is not a polynomial function on $\mathbb{K}^d$, but it is an ordinary polynomial separately in each variable. This construction can be translated to the case of functions $\mathbb{Z}^d\to \mathbb{K}$. Indeed, if $\mathbb{K}$ has characteristic zero, the result follows just by considering the function $\chi_{|\mathbb{Z}^d}$. An interesting open question is if these functions can also be constructed from $\mathbb{Z}^d$ into $\mathbb{Z}$.

Let $X_d$ denote either $C(\mathbb{R}^d, \mathbb{C})$ or the space of complex valued distributions defined on $\mathbb{R}^d$. The same kind of arguments we have used for the proof of Theorem \ref{discretemonteltheorem}, with small variations, jointly with Anselone-Korevaar's characterization of translation invariant finite dimensional subspaces of 
$X_d$  \cite{anselone} as the spaces admitting a basis of the form 
\begin{equation} \label{base2}
\beta =\{
 x^{(\alpha_{k,1},\cdots,\alpha_{k,d})}e^{< x,\lambda_k>}, \ \  0\leq \alpha_{k,i}\leq m_{k,i}-1 \text{ for }  i=1,\cdots,d  \text{ and } k=0,1,2,\cdots,s\}, 
\end{equation}
(where $\lambda_0=1$ and $m_{k,i}=0$ means that the variable $x_{i}$ does not appear in the polynomial part of the exponential monomials of the form $x^{\alpha}e^{<x,\lambda_k>}$) lead to a proof of the following result, which is an improvement of classical  Montel's theorem in several variables, since it is formulated for distributions:

\begin{theorem}[Montel's Theorem for distributions] \label{montelvv}  Let $\{h_1,\cdots,h_{\ell}\}\subset \mathbb{R}^d$  be such that  $ h_1\mathbb{Z}+ \cdots + h_{\ell}\mathbb{Z}$ is a dense subset of $\mathbb{R}^d$, 
and let $f\in X_d$. If $\Delta_{h_k}^m(f) =0$, $k=1,\cdots,\ell$, then  $f(x)=\sum_{|\alpha|<N}a_{\alpha}x^{\alpha}$ for some $N\in\mathbb{N}$, some complex numbers $a_{\alpha}$, and all $x\in\mathbb{R}^d$. Thus, $f$ is an ordinary complex valued polynomial in $d$ real variables. Furthermore, if $d=1$, then $f(x)=a_1+a_1x+\cdots+a_{m-1}x^{m-1}$ is an ordinary polynomial of degree $\leq m-1$.
\end{theorem}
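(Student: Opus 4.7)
The plan is to mimic the argument used for Theorem \ref{discretemonteltheorem}, adapting each step to the continuous/distributional setting and invoking Anselone--Korevaar in place of Lefranc.

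Step 1: Build an invariant subspace. Let $V=\mathbf{span}\{f\}\subset X_d$. The difference operators $\Delta_{h_1},\ldots,\Delta_{h_\ell}$ commute, and $\Delta_{h_k}^m(V)=\{0\}\subseteq V$ for all $k$. Lemma \ref{dos} (with $E=X_d$, $L_i=\Delta_{h_i}$) then produces the finite dimensional subspace
\[
W=\diamond_{\Delta_{h_1},\ldots,\Delta_{h_\ell}}^m(V)\supseteq V,
\]
which is $\Delta_{h_k}$-invariant for each $k$. Since $\tau_{h_k}=I+\Delta_{h_k}$, where $\tau_h g(x)=g(x+h)$, this means $W$ is invariant under the semigroup, and hence the group, generated by the translations $\tau_{h_1},\ldots,\tau_{h_\ell}$; that is, $\tau_h(W)\subseteq W$ for every $h\in h_1\mathbb{Z}+\cdots+h_\ell\mathbb{Z}$.

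Step 2: Pass from dense translation invariance to full translation invariance. The map $h\mapsto \tau_h$ is continuous from $\mathbb{R}^d$ into the operators on $X_d$ (uniform continuity on compacta for $C(\mathbb{R}^d,\mathbb{C})$, continuity for distributions). Since $W$ is finite dimensional, its unique Hausdorff linear topology is compatible with that of $X_d$, so the orbit map $h\mapsto \tau_h g$ is continuous into $X_d$ for each $g\in W$. Given that $\tau_h g\in W$ for $h$ in the dense set $h_1\mathbb{Z}+\cdots+h_\ell\mathbb{Z}$ and $W$ is closed in $X_d$ (being finite dimensional), we get $\tau_h(W)\subseteq W$ for every $h\in\mathbb{R}^d$.

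Step 3: Apply Anselone--Korevaar and peel off the exponentials. The characterization of finite dimensional translation invariant subspaces of $X_d$ gives a basis of $W$ of the form \eqref{base2}, so $W$ embeds into $\mathcal{E}=P\oplus E_1\oplus\cdots\oplus E_s$ with $P$ spanned by monomials $x^\alpha$ (corresponding to $\lambda_0=0$) and $E_k$ spanned by $\{x^\alpha e^{\langle x,\lambda_k\rangle}\}_{|\alpha|\le m_k}$ for distinct nonzero $\lambda_k\in\mathbb{C}^d$. Direct computation yields $\Delta_h(E_k)\subseteq E_k$ and, in the graduated lexicographic ordering of the basis of $E_k$, the matrix of $(\Delta_h)_{|E_k}$ is upper triangular with constant diagonal $e^{\langle h,\lambda_k\rangle}-1$; it is therefore invertible (hence so is $\Delta_h^m$) whenever $e^{\langle h,\lambda_k\rangle}\neq 1$. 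Writing $f=p_0+b_1+\cdots+b_s$, the equations $\Delta_{h_j}^m f=0$ decouple into $\Delta_{h_j}^m b_k=0$ for each $k$, so it suffices to produce, for each $k\ge 1$, some index $j_k$ with $e^{\langle h_{j_k},\lambda_k\rangle}\neq 1$, which forces $b_k=0$.

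Step 4: The density argument eliminating nonzero frequencies. Split $\lambda_k=a_k+\mathbf{i}b_k$ with $a_k,b_k\in\mathbb{R}^d$. If $e^{\langle h_j,\lambda_k\rangle}=1$ for all $j=1,\ldots,\ell$, then $\langle h_j,a_k\rangle=0$ and $\langle h_j,b_k\rangle\in 2\pi\mathbb{Z}$ for every $j$. By $\mathbb{Z}$-linearity, $\langle y,a_k\rangle=0$ and $\langle y,b_k\rangle\in 2\pi\mathbb{Z}$ for every $y\in h_1\mathbb{Z}+\cdots+h_\ell\mathbb{Z}$. Density of this set in $\mathbb{R}^d$ plus continuity forces $a_k=0$; and the image $\langle h_1\mathbb{Z}+\cdots+h_\ell\mathbb{Z},b_k\rangle=\langle h_1,b_k\rangle\mathbb{Z}+\cdots+\langle h_\ell,b_k\rangle\mathbb{Z}$ is a subgroup of $\mathbb{R}$ contained in the discrete set $2\pi\mathbb{Z}$, whereas if $b_k\neq 0$ this subgroup would be the image under a continuous surjection of a set dense in $\mathbb{R}^d$ and would hence be dense in $\mathbb{R}$, a contradiction. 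Thus $\lambda_k=0$, contradicting $\lambda_k\neq 0$. Therefore $f=p_0$ is an ordinary polynomial. For $d=1$, the degree bound $\deg f\le m-1$ is obtained, verbatim as in the proof of Theorem \ref{discretemonteltheorem}, from the structure of the matrix \eqref{A0} acting on $\Pi_{m_0}$, whose $m$-th power has kernel exactly $\Pi_{m-1}$.

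The main obstacle is Step 2, the upgrade from dense to full translation invariance in the distribution case: one must confirm that the orbit $h\mapsto\tau_h g$ is continuous into $X_d$ and that $W$, being finite dimensional, is automatically closed, so that the inclusion $\tau_h(W)\subseteq W$ extends from the dense subgroup to all of $\mathbb{R}^d$. Everything downstream is a direct transcription of the discrete proof, with the Anselone--Korevaar basis \eqref{base2} playing the role that Corollary \ref{cor_uno} played in $C(\mathbb{Z}^d,\mathbb{C})$.
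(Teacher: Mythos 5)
Your proposal is correct and follows essentially the same route as the paper: Lemma \ref{dos} produces a finite dimensional $\Delta_{h_i}$-invariant space $W\supseteq\mathbf{span}\{f\}$, density of $h_1\mathbb{Z}+\cdots+h_\ell\mathbb{Z}$ together with closedness of finite dimensional subspaces upgrades this to full translation invariance, Anselone--Korevaar supplies the exponential-monomial basis, and the triangular-matrix/invertibility argument from Theorem \ref{discretemonteltheorem} kills the terms with $\lambda_k\neq 0$ and yields the degree bound for $d=1$. Your Steps 2 and 4 merely make explicit details the paper leaves implicit (the continuity of the orbit map and the elimination of purely imaginary frequencies), which is a useful but not structurally different elaboration.
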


\begin{proof} A repetition of the proof of Proposition \ref{VdentroW}, but considering the operators $\Delta_{h_i}$ defined for distributions $f\in X_d$, shows that every finite dimensional subspace $V$ of $X_d$ which satisfies 
\begin{equation}
\Delta_{h_i}^m(V)\subseteq V \text{ for } i=1,\cdots,\ell,
\end{equation}
is included into an space $W$ which admits a basis of the form $\eqref{base2}$, since we can obtain a subspace $W\subseteq X_d$ which contains $V$ and it is invariant under the translations $\tau_{h_i}:X_d\to X_d$, $i=1,\cdots,\ell$, and, which implies that $W$ is invariant under all translations $\tau_{h}$ with $h\in  h_1\mathbb{Z}+ \cdots + h_{\ell}\mathbb{Z}$, which is a dense subset of $\mathbb{R}^d$. Hence $W$ is translation invariant and, thanks to Anselone-Korevaar's theorem, admits a basis of the form 
$\eqref{base2}$. Now we include the space $W$ into another space which admits a basis of the form
\begin{equation}\label{base3}
\beta = \{x^\alpha\}_{0\leq |\alpha|\leq m_0}\cup \{
 x^{\alpha}e^{< x,\lambda_k>}, \ \  0\leq |\alpha|\leq m_k \text{ and } k=1,2,\cdots,s\},
\end{equation}
since these bases are better suited for our computations. 
The proof ends taking $V=\mathbf{span}\{f\}$ with $\Delta_{h_k}^m(f) =0$, $k=1,\cdots,\ell$ and repeating the arguments of Theorem \ref{discretemonteltheorem} with small modifications.
\end{proof}

\begin{remark} In \cite[Page 78, Theorem 10.2]{laszlo1}, Anselone-Korevaar's theorem was generalized to the context of measurable complex valued functions defined on locally compact abelian groups. This produces the expectative of proving a new version of Theorem \ref{montelvv}, by changing the condition $f\in X_d$ by the hypothesis that  $f:\mathbb{R}^d\to\mathbb{C}$ is a measurable function.   In other words,  the question if Montel's theorem holds true for measurable functions arises in a natural way. Unfortunately, the answer is negative: Montel's theorem fails for measurable functions, for all $d\geq 1$. Indeed, let us assume that $f$ is measurable, $\Delta_{h_k}^m(f) =0$, $k=1,\cdots,\ell$, with  $\{h_1,\cdots,h_{\ell}\}\subset \mathbb{R}^d$  such that  $ h_1\mathbb{Z}+ \cdots + h_{\ell}\mathbb{Z}$ is a dense subset of $\mathbb{R}^d$, and let us apply the arguments in Lemma \ref{dos} to $V=\mathbf{span}\{f\}$ with $L_i=\Delta_{h_i}$. Then $V$ is contained into a finite dimensional space $W$ which is invariant under all translations of the form  
$\tau_h$ with $h\in   h_1\mathbb{Z}+ \cdots + h_{d+1}\mathbb{Z}$. But this does not imply that $W$ is translation invariant! Obviously, this explains why our argument fails. To prove that Montel's theorem fails for measurable functions we give a counterexample. Assume $d=1$ and $h_1,h_2\in\mathbb{R}\setminus\{0\}$, $h_1/h_2\not\in\mathbb{Q}$. Define $f(ph_1+qh_2)=pq$ for all $p,q\in\mathbb{Z}$, and $f(x)=0$ for all $x\not\in h_1\mathbb{Z}+h_2\mathbb{Z}$. Then $f$ is measurable and a simple computation shows that $\Delta_{h_1}^nf=\Delta_{h_2}^nf=0$ for all $n\geq 2$. On the other hand, $f$ is not a polynomial. In fact $f$ is also not a solution of Fr\'{e}chet's functional equation. Similar examples can be constructed for all $d>1$.
\end{remark}

 \bibliographystyle{amsplain}






\bigskip

\footnotesize{J. M. Almira and Kh. F. Abu-Helaiel

Departamento de Matem\'{a}ticas. Universidad de Ja\'{e}n.

E.P.S. Linares,  C/Alfonso X el Sabio, 28

23700 Linares (Ja\'{e}n) Spain

Email: jmalmira@ujaen.es }





\end{document}